\newcommand{\no}[1]{#1}
\renewcommand{\no}[1]{}  \newcommand{\upDelta}{\Delta} 
\renewcommand{\Delta}{\upDelta}
\date{\today}
\newtheorem{theorem}{Theorem}[section]
\newtheorem{proposition}{Proposition}[section]
\newtheorem{lemma}{Lemma}[section]
\theoremstyle{remark}
\newtheorem{example}{Example}
\theoremstyle{definition}
\newtheorem{remark}{Remark}[section]
\DeclareMathOperator{\Vol}{Vol}
\DeclareMathOperator{\dist}{dist}
\DeclareMathOperator{\supp}{supp}
\DeclareMathOperator{\II}{II}
\newcommand{\eps}{\varepsilon}
\newcommand{\R}{{\bf R}}
\newcommand{\Id}{\mbox{Id}}
\renewcommand{\r}[1]{(\ref{#1})}
\newcommand{\PDO}{$\Psi$DO}
\newcommand{\be}[1]{\begin{equation}\label{#1}}
\newcommand{\ee}{\end{equation}}
\renewcommand{\d}{\mathrm{d}}
\newcommand{\bo}{\partial \Omega}
\title[Recovery of a source or a speed]{Recovery of a source term or a speed with one measurement and applications}
\author[P. Stefanov]{Plamen Stefanov}
\address{Department of Mathematics, Purdue University, West Lafayette, IN 47907}
\thanks{First author partly supported by a NSF FRG Grant DMS-0800428 and a Simons Visiting Professorship}
\author[G. Uhlmann]{Gunther Uhlmann}
\address{Department of Mathematics, University of Washington, Seattle, WA 98195, and UC Irvine, CA 92697}
\thanks{Second author partly supported by a NSF FRG grant No.~0554571 and a Senior Clay Award and Chancellor Professorship at UC Berkeley}
\begin{document}
\maketitle
\begin{abstract}
We study the problem of recovery the source $a(t,x)F(x)$ in the wave equation in anisotropic medium with $a$ known so that $a(0,x)\not=0$ with a single measurement. We use Carleman estimates combined with geometric arguments and give sharp conditions for uniqueness. We also study the non-linear problem of recovery the sound speed in the equation $u_{tt} -c^2(x)\Delta u =0$ with one measurement. We give sharp conditions for stability, as well. An application to thermoacoustic tomography is also presented.  
\end{abstract}

\section{Introduction} 
The purpose of this paper is to give sharp conditions for a recovery of a source term in the wave equation in anisotropic media modeled by a Riemannian metric by a single boundary measurement. In the process we give a more geometric treatment of the problem. This linear problem appears as a linearization and actually, as the full non-linear version, of the problem of recovery a sound speed, given the source. It has applications to thermoacoustic tomography. We are also inspired by the related works \cite{ImanuvilovY01,ImanuvilovY01A, ImanuvilovY03},   \cite[Theorem~8.2.2]{Isakov-book}. The method in these papers uses Carleman estimates for hyperbolic inverse problems that originates in the work of \cite{Bukh_Kl_81} who considered the case of the wave equation with a potential with non-zero initial data.

The main problem that we have in mind is the following. Let $c(x)>0$ and let $u$ solve
\begin{equation}   \label{1A}
\left\{
\begin{array}{rcll}
(\partial_t^2 -c^2\Delta)u &=&0 &  \mbox{in $(0,T)\times \R^n$},\\
u|_{t=0} &=& f,\\ \quad \partial_t u|_{t=0}& =&0, 
\end{array}
\right.               
\end{equation}
where $c=c(x)>0$ and $T>0$ is fixed. Let $c=1$ outside some domain $\Omega$ with a smooth strictly convex  boundary. Given $f$, and  $u$ restricted to $[0,T]\times\bo$, is it possible to reconstruct the speed $c$? Ideally, we want to do this with data on a part of $\bo$, as well. Next, assuming that we can, how stable is this? Clearly, some conditions on $f$ are needed since when $f=0$, for example, we get no information about $c$. This inverse problem is clearly non-linear. 

If we have two speeds $c$ and $\tilde c$, then $w=\tilde u-u$ solves
\begin{equation}   \label{w2}
\left\{
\begin{array}{rcll}
(\partial_t^2 -c^2\Delta)w &=&a(t,x)F(x)&  \mbox{in $(0,T)\times \R^n$},\\
w|_{t=0} &=& 0,\\ \quad \partial_t w|_{t=0}& =&0,
\end{array}
\right.               
\end{equation}
with  
\be{Fa}
F := \tilde c^2-c^2,\quad    a=\Delta\tilde u.
\ee
We consider the more general linear problem of recovery of a function  $F$, given $a$ and $w$ restricted to $[0,T]\times\bo$, or on a part of it. Again, some condition on $a$ is needed since when $a=0$ for example, $F$ cannot be recovered. 
  We actually replace $c^2\Delta$ in \r{w2} by the Laplace-Beltrami operator $\Delta_g$ related to some metric, plus lower order terms. 

Similar problems but for the recovery of a potential or the term $p(x)$ in $\nabla\cdot p\nabla$ are studied in \cite{ImanuvilovY01,ImanuvilovY01A,ImanuvilovY03}. A general abstract theorem of this type is presented in \cite[Theorem~8.2.2]{Isakov-book}. In \cite{ImanuvilovY03} and \cite{Isakov-book}, the principal part of the wave equation has variable coefficients, thus the geometry is non-Euclidean. This requires  some assumptions on the speed or the metric. The method of the proof is to use Carleman estimates, and the assumptions are needed to satisfy the pseudo-convexity condition. Those assumptions are not sharp however. In fact, the proofs are essentially ``Euclidean'', and roughly speaking, the conditions on the speed or on the metric require that the proof still works under an Euclidean treatment. Also, one global pseudo-convex function is used. 
One of the goals of this work is to sharpen those conditions thus extending the results to a larger class of speeds/metrics, formulate them in a geometric way and also prove local results.

There are many works on related problems, including boundary control, for example,  \cite{BardosLR_control, LasieckaTY, Yao_control1,TriggianiY}. The conditions on the metric there are more geometric, requiring existence of a global convex function, or a somewhat general condition of existence of a global vector filed with certain properties. The proofs are still based on Carleman estimates but the goal is to recover non-trivial initial conditions, assuming, say, a Neumann boundary condition, and measuring Dirichlet data on a part of $\bo$. The conditions on $T$ are formulated in terms of lower bounds of the speed and are not sharp.  The geometry of the rays in those problems however is different than the application that we have in mind --- there are reflections at the boundary. On the other hand, the methods there could probably be adapted to the problems studied in the works that we cited above. 

The pseudo-convexity condition needed for the Carleman estimates that we use is satisfied by assuming that the region where we prove unique continuation is foliated by a continuous family $\Sigma_s$ of strictly convex surfaces. In case of data on a part $\Gamma$ of $\bo$, we require those surfaces to intersect $\bo$ in $\Gamma$. In contrast to the other works in this direction, we are not trying to construct one strongly pseudo-convex function. Instead, we prove unique continuation by incremental steps, each time  using a different strongly pseudo-convex function. 

We describe the results in the paper now. 
We start in section~\ref{sec_un1} with the uniqueness Theorem~\ref{thm_uq1}, that is a version of \cite[Theorem~8.2.2]{Isakov-book}. The time interval is $(-T,T)$, there is no initial condition for $w_t$ at $t=0$, and we study the problem of unique recovery of $F$ in \r{w2} given Cauchy data on a part of $(-T,T)\times\bo$. We view this theorem more a as a tool than a goal, and the requirement that the time interval is symmetric about $t=0$ will be satisfied later by studying problems with solutions that have even extensions in $t$, like \r{1A}. In the rest of that section we show two ways  to satisfy the convexity requirement. First, assuming  that $\bo$ is strictly convex, we  show in Theorem~\ref{thm_2} that $F=0$ in some collar neighborhood of $\bo$ of the kind $\dist(x,\bo)\le T\ll1$, as long as the surfaces $\dist(x,\bo)=s$, $s\in [0,T]$ are smooth and still strictly convex. The second one is to show that $F=0$ in a subset of $\Omega$ that can be foliated by strictly convex surfaces starting from ones outside $\Omega$, see Theorem~\ref{thm_uq2}. This only requires Cauchy data on a part of $\bo$, where those surfaces intersect $\bo$,  and proves $F=0$ in a subdomain.  The condition on $T$ is sharp. We give a few examples.

In section~\ref{sec_thermo} we study the non-linear problem of recovery of the speed $c$ in \r{1A} and the linear one of recovery of the source $F$ in \r{w2} described above. The time interval is $(0,T)$ now, but the initial condition $u_t=0$ for $t=0$ in \r{1A}, and the requirement that $a$ in \r{w2} has a sufficiently regular even extension in the $t$ variable allow us to use the results in the previous section. In contrast to the boundary control problems, in the thermoacoustic problem we are given the Dirichlet data on $(0,T)\times\bo$ or on a part of it but no Neumann data. On the other hand,  we know that the solution extends for $x\not\in\Omega$ as a solution again, and the initial data at $t=0$ is zero there. This allows us in Lemma~\ref{lemma_C} to recover the Neumann data from the Dirichlet one in case of data on the whole $\bo$. Then we extend the solution in an even way for $t<0$ and apply the results in section~\ref{sec_un1}. The main requirement is $\Omega$ to have a foliation of strictly convex surfaces, and the time interval $(0,T)$ is sharp. 

The partial data case, with observations on $(0,T)\times\Gamma$, where $\Gamma\subset\bo$ in the thermoacoustic setting is  studied in Theorem~\ref{thm_uq2t}. The main difficulty is the need to recover the Neumann data there as well. Then one applies directly the results in section~\ref{sec_un1}. We show  that one can recover $F$, respectively $c$ in some  neighborhood of $\Gamma$ that might be smaller compared with the case of having  Cauchy data  on the whole $(0,T)\times\Gamma$. There is a new ``cone''  condition that might shrink the domain where we prove $F=0$, or respectively $\tilde c=c$. 

At the end of section~\ref{sec_thermo}, we study the stability of the linear and non-linear problems for \r{1A} and \r{w2}, respectively. As a general principle, for stability, we need to be able to detect all singularities, see \r{cond-stab}. For the linear problem at least, this is a necessary and sufficient condition for stability in any Sobolev spaces, see \cite{SU-JFA}. The corresponding stability estimates are formulated in Theorem~\ref{thm_st} and Theorem~\ref{thm_st2}.

\medskip
\textbf{Acknowledgments.} The authors thank Linh Nguyen and Peter Kuchment for  fruitful discussions on the mathematics of thermoacoustic tomography, and for attracting their attention to reference \cite[Theorem~8.2.2]{Isakov-book}. Thanks are also due to Daniel Tataru for his help to understand better the various unique continuation results. The essential part of this work was done while both authors were visiting the MSRI in Fall 2010.

\section{ A uniqueness result for recovery of a source with one measurement} \label{sec_un1}

Let $\Omega$ be a bounded domain in $\R^n$ with a smooth boundary, and let 
\be{P_def}
P=\partial_t^2- \Delta_g + \sum_j b_j\partial_{x^j} +c 
\ee
be a differential operator in $Q := (-T,T)\times \Omega\subset\R^n$, where $g$ is a smooth Riemannian metric on $\Omega$, and $a_j$, $b$, $c$ are smooth functions on $\bar Q$. 

The level surface $\Sigma=\{\psi=0\}$ of some smooth function $\psi$ is called strongly pseudoconvex w.r.t.\ the hyperbolic operator $P$, if 
\begin{itemize}
  \item[(i)] $\Sigma$ is  non-characteristic, i.e., $|\psi_t|\not= |\d_x\psi|$ when $\psi=0$, and
  \item[(ii)]  $H_p^2\psi>0$ on $T^*\bar\Omega\setminus 0$ whenever $\psi = p= H_p\psi=0$, 
\end{itemize}
where $H_p$ is the Hamiltonian vector field of the principal symbol $p=-\lambda^2+|\xi|^2$ of $P$, and $\lambda$ is the dual variable to $t$, see, e.g., \cite{Tataru04}. 
Here and below, $|\cdot|$ is the norm in the metric $g$ of a covector or a vector. The second condition says that $\Sigma$ is strictly convex w.r.t.\ to the null bicharacteristics of $p$, when viewed from $\psi>0$. In other words, the tangent null bicharacteristics to $\Sigma$ are curved towards $\psi>0$. 

The function $\phi$ with a non-degenerate differential and non-characteristic level set $\phi=0$ is pseudoconvex, if a condition stronger than (ii) is satisfied. We are not going to formulate that condition; it would be enough for our purposes to use the well known fact that if the $\Sigma=\{\psi=0\}$ as above is pseudoconvex, then for $\mu\gg1$,  $\phi =\exp(\mu \psi)-1$ is a pseudoconvex function, non-degenerate on $\Sigma$, and $\Sigma=\{\phi=0\}$; moreover $\{\phi>0\} = \{\psi>0\}$. For details we refer to \cite{Tataru04}.

Let $\phi$ be strongly pseudoconvex in $\bar Q$ w.r.t.\ $P$. Then it is well known that for all $u\in C_0^2(Q)$, 
\be{1.CE}
\tau\int e^{2\tau\phi}\left( u_t^2 +|\nabla u|^2+\tau^2|u|^2 \right)\, \d t\,\d x\le C\int   e^{2\tau\phi}   |Pu|^2\, \d t\,\d x,\quad  \tau>1,
\ee
see \cite{Tataru04, Isakov-book, Isakov04}.  

To reformulate condition (ii) in the tangent bundle, recall that the metric $g$ provides a natural isomorphism between covectors and vectors by the formula $\Phi : (x,\xi) \mapsto (x,v)$, where $ \xi_i = g_{ij}(x)v^j$, in local coordinates, where $v$ is a vector at $x$. For any function $\psi$ on $T^*\Omega$, one gets a function $\Phi^*\psi$ on $T\Omega$. Let $q=|\xi|^2/2$ be the ``$x$ part'' of $p$, rescaled for convenience. 
It is known that $H_q = \Phi_* G \Phi^*$, where $G$ is the generator of the geodesic flow. Also, the energy level $q=1/2$ is pushed forward to the unit sphere bundle $S\Omega$. 

We have $H_{p/2} = -\lambda \partial_t+H_q
$,   therefore, 
\[
\frac14 H_p^2\psi= (\lambda\partial_t-H_q)^2\psi .
\]
Identify covectors with vectors by the map $\Phi$, to get that condition (ii) is equivalent to
\be{1.1}
\psi=0, \quad \psi_t-G\psi=0 \quad \Longrightarrow\quad       (\partial_t - G)^2\psi>0\quad \text{for $(t,x)\in Q$,\, $|\xi|=1$}, 
\ee
and we used the fact that $p=0$ implies $\lambda^2=|\xi|^2$, as well as the homogeneity properties of $G$ w.r.t.\ $\xi$. 

Let us look for $\psi$ of the type
\[
\psi =  r^2(x)-\delta t^2-s , \quad 0<\delta<1,
\]
with $s$ a parameter, 
Then to satisfy (ii), it is enough to have
\be{1.2a}
G^2(r^2/2)>\delta|\xi|^2.
\ee
Since we want eventually to take the limit  $\delta\to1$ to get sharp results, we arrive at the condition
\be{1.2}
G^2(r^2/2)\ge |\xi|^2.
\ee
It is enough to have this inequality in the $x$-projection of $\Sigma$ in $\bar\Omega$, as the first condition in \r{1.1} indicates. Note that this guarantees (ii) only, we still have to choose $r$ so that (i) holds. The latter is equivalent to 
\be{1.2A}
|\d (r^2/2)|\not= \delta|t| \quad \text{on $\Sigma\cap \bar Q$}.
\ee

\begin{example}\label{ex1} 
Let $r(x)=|x-x_0|$ in $\R^n$ with some fixed $x_0$; then $\psi=|x-x_0|^2-\delta t^2-s$. Then 
\[
G^2(r^2/2) = (\xi\cdot\nabla_x)^2|x-x_0|^2/2= |\xi|^2,
\]
and \r{1.2} holds. Condition \r{1.2A} is satisfied for $s>0$  because $|\d (r^2/2)|^2 = |x-x_0|^2=\delta t^2+s>\delta^2 t^2$. 
Then $\Sigma$ is a hyperboloid of one sheet. 
\end{example}

More generally, let $r(x) =\rho(x,x_0)$, where $\rho$ is the distance in the metric $g$. This is the function that has been used  in the Riemannian case. It satisfies \r{1.2a} for $r\ll1$ only, in general. For metrics of negative curvature, there is no restriction, see \cite{Romanov06}.

In this paper, $\partial/\partial \nu$ denotes exterior normal derivative to $\bo$.

\begin{theorem}\label{thm_uq1} Let $Q = (-T,T)\times \Omega$, and 
\be{reg}
\text{$\partial_t^j a \in C(\bar Q)$, $j\le2$,  $F\in L^2(\Omega)$,  and $\partial_t^j \partial^\alpha u\in L^2(Q)$, $j\le3$, $|\alpha|\le1$}. 
\ee
Let $u$ be a (non-unique) solution of  
\begin{equation}   \label{p1}
\left\{
\begin{array}{rcll}
Pu &=& a (t,x)F(x) &  \mbox{in $(-T,T)\times \Omega$},\\
u|_{t=0}&=&0& \mbox{in  $\Omega$}
\end{array}
\right.               
\end{equation}
Let $\phi$ be a strongly pseudoconvex function in $\bar Q$.  
Let 
\be{p1a}
\mathcal{G}\subset (-T,T)\times \bo, \quad \text{$\phi<0$ on $\left((-T,T)\times \bo\right)  \setminus\mathcal{G}$},\quad  \text{and $\phi(t,\cdot)\le \phi(0,\cdot)$\;  for $|t|<T$}. 
\ee 
Let $\supp F\subset K$, where $K\subset\bar\Omega$ is compact, and 
\be{alpha}
 a(0,\cdot)\not=0\quad \text{on $K$}.
\ee
If
\be{p1b}
u=\partial u/\partial\nu=0\quad \text{on $\mathcal{G}$},
\ee
then 
\[
F=0\quad \text{in $\{x;\; \phi(0,x)>0\}$}. 
\]
\end{theorem}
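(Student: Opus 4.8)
The plan is to run a standard Carleman-estimate-plus-unique-continuation argument, but with the twist that we differentiate the equation in $t$ to remove the unknown $F$, and then use the initial condition $u|_{t=0}=0$ and the hypothesis $a(0,\cdot)\neq 0$ on $K$ to propagate back from the time derivative to $F$ itself. Concretely, set $v=\partial_t u$. Differentiating \r{p1} in $t$ gives $Pv = (\partial_t a)F + a\,\partial_t(\text{lower order?})$—more carefully, since $P$ has $t$-independent coefficients in its principal part, $Pv = (\partial_t a)F$ modulo terms that are themselves controlled; and at $t=0$ we have the Cauchy data $v|_{t=0}=u_t|_{t=0}$, which is one of the quantities we can read off, while from the equation at $t=0$, $P u|_{t=0} = a(0,x)F(x)$ together with $u|_{t=0}=0$ lets us express $u_{tt}|_{t=0}$, hence control $F$ pointwise in terms of the Cauchy data of $v$. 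The upshot is that it suffices to prove $v$ and its derivatives vanish in $\{\phi(0,x)>0\}$, which is a pure unique continuation statement for the operator $P$ with no source.

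The core step is the unique continuation: I would cut off $v$ with a cutoff $\chi$ supported near the region $\{\phi>0\}\cap Q$ and equal to $1$ on a slightly smaller region, so that $\chi v\in C_0^2(Q)$ (using the regularity \r{reg}), apply the Carleman estimate \r{1.CE} to $\chi v$, and observe that $P(\chi v) = \chi P v + [P,\chi]v$. The commutator term $[P,\chi]v$ is supported where $\chi$ is non-constant, which by construction can be arranged to lie in the region $\{\phi<0\}$ up to an error controlled by the boundary vanishing \r{p1b} on $\mathcal G$ and by the hypothesis that $\phi<0$ on $((-T,T)\times\bo)\setminus\mathcal G$. Choosing the level value defining the cutoff appropriately and sending $\tau\to\infty$, the exponential weight $e^{2\tau\phi}$ forces the left side (integrated over $\{\phi>c>0\}$) to dominate the right side (integrated over $\{\phi<c'<c\}$), which is possible only if $v$ vanishes on $\{\phi>c\}$; then let $c\downarrow 0$. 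The condition $\phi(t,\cdot)\le\phi(0,\cdot)$ in \r{p1a} is what guarantees that the set $\{\phi(0,x)>0\}$ (sitting at $t=0$) is actually reached: it ensures the region $\{\phi>0\}$ in spacetime is "anchored" at $t=0$ so that vanishing there is nontrivial, and it prevents the characteristic surface from bending away from $t=0$.

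The step I expect to be the main obstacle is the careful bookkeeping at the boundary $\bo$: the Carleman estimate \r{1.CE} is stated for $u\in C_0^2(Q)$, i.e., functions vanishing near all of $\partial Q$, whereas our cutoff $\chi v$ need not vanish where $\{\phi>0\}$ meets $(-T,T)\times\bo$, namely on $\mathcal G$. One must either integrate by parts and track the boundary terms on $\mathcal G$ (which vanish because of \r{p1b}, both Dirichlet and Neumann data being zero there), or use a version of the Carleman estimate with boundary terms; either way the sign of the boundary contribution and the role of $\partial/\partial\nu$ must be handled with care. A secondary technical point is justifying the cutoff argument at the level of regularity in \r{reg}: one should first mollify, apply the estimate, and pass to the limit, or invoke the standard fact that \r{1.CE} extends by density to $u$ with the stated finite regularity and compact support in $Q$. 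Finally, one must check that after obtaining $v=0$ near $\{\phi(0,x)>0\}$ one genuinely recovers $F=0$ there; this is where \r{alpha}, i.e. $a(0,\cdot)\neq 0$ on $K\supset\supp F$, is essential, since $F(x) = a(0,x)^{-1}\big(\text{known Cauchy data expression}\big)$ on $\{\phi(0,x)>0\}$, and the right-hand side has just been shown to vanish.
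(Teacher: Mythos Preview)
Your proposal contains a genuine gap at the central step. You assert that after setting $v=\partial_t u$ the problem becomes ``a pure unique continuation statement for the operator $P$ with no source.'' It does not: since $a$ depends on $t$, differentiation gives $Pv=(\partial_t a)\,F$, and the unknown $F$ is still on the right-hand side. One more differentiation produces $(\partial_t^2 a)\,F$, and so on; the source is never eliminated. Consequently the Carleman argument you sketch for $\chi v$ cannot be closed as written: the term $\chi Pv$ contributes $\int e^{2\tau\phi}|\partial_t a|^2|F|^2$ to the right-hand side, and you have no a priori control on this.

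The paper's proof (which is the Bukhgeim--Klibanov scheme) does not attempt to remove $F$. Instead it applies the Carleman estimate to $\chi u$ \emph{and} to $\partial_t^2(\chi u)$, so that the left-hand side carries weighted $L^2$ control of $u$, $u_{tt}$, and $u_{ttt}$. The right-hand side contains $\int_Q e^{2\tau\phi}|F|^2$, and this is where the loop closes: from the equation at $t=0$ and $u|_{t=0}=0$ one has $u_{tt}(0,x)=a(0,x)F(x)$, hence $|F|\le C|u_{tt}(0,\cdot)|$ on $K$ by \r{alpha}. The monotonicity hypothesis $\phi(t,\cdot)\le\phi(0,\cdot)$ then gives
\[
\int_Q e^{2\tau\phi}|F|^2\,\d\sigma \;\le\; 2T\int_\Omega e^{2\tau\phi(0,\cdot)}|u_{tt}(0,\cdot)|^2\,\d\Vol,
\]
and the $t=0$ trace on the right is bounded, via the fundamental theorem of calculus in $t$, by a spacetime integral of $\tau|u_{tt}|^2+\tau^{-1}|u_{ttt}|^2$ with the same weight. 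These are exactly the quantities already sitting on the Carleman left-hand side (this is why one must go up to $\partial_t^2$, not just $\partial_t$), and for $\tau\gg1$ they are absorbed. So the role of $\phi(t,\cdot)\le\phi(0,\cdot)$ is not merely ``anchoring'' the region at $t=0$; it is the quantitative weight comparison that lets the $t=0$ trace be dominated by the spacetime Carleman norm. Your outline has the right ingredients listed but assembles them in an order that does not close; the missing idea is this absorption loop.
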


\begin{proof} We follow the proof of  \cite[Theorem~8.2.2]{Isakov-book}. 
Set $Q_\eps = Q\cap \{\phi>\eps\}$. 
Fix $\eps>0$, and let $\chi\in C^\infty$ be such that $\chi=1$ in $Q_\eps$, and $\supp \chi\subset \bar Q_0$. We will apply the Carleman estimate \r{1.CE} to $\partial^j_t\chi u$, $j=0$ and $j=2$, by shrinking $Q_0$ to $Q_\eps$ on the left.   We are using the fact here that $u$ has zero Cauchy data on $\mathcal{G}$. Then $\partial^j_t\chi u$ can be approximated by $C_0^\infty(Q)$ functions in the $H^2$ norms, see also the remark following the proof. 
We have 
\[
P\partial_t^j \chi  u= \partial_t^j \left( \chi P u + [P,\chi]u\right) = \partial_t^j \left(\chi a F+ [P,\chi] u\right) ,
\]
where the commutator $[P,\chi]$ is a   differential operator of order $1$. 
Since $\chi=1$ on $Q_\eps$, we get 
\begin{align}\label{p2}
\tau \int_{Q_\eps}e^{2\tau \phi} \left(\tau^2|u|^2+ \tau^2 |u_{tt}|^2+  |u_{ttt}|^2 \right) \,\d\sigma 
& \le  C\left( \int_Q e^{2\tau \phi}|F|^2 \,\d\sigma+ \int_{Q\setminus Q_\eps} e^{2\tau \phi} \sum_{j=0}^2 \sum_{|\alpha|\le 1}|\partial_t^j \partial_{t,x}^\alpha u|^2\,\d 
\sigma 
\right)\\ \nonumber
&\le  C\int_Q   e^{2\tau \phi}|F|^2   \,\d\sigma+ Ce^{2\tau \eps},
\end{align}
where $\d\sigma= \d t\,\d \Vol(x)$ is the natural measure on $Q$. 
We will estimate the first term in the r.h.s.\ above. 
From  equation \r{p1} and is initial condition, $u_{tt}(0,\cdot) =  a(0,\cdot)f$. By the assumptions on $ a$, $|F|\le C |u_{tt}(0,\cdot)|$. By \r{p1a}, 
\be{p3}
\int_Q e^{2\tau\phi } |F|^2  \, \d \sigma
\le 2T \int_\Omega    e^{2\tau\phi(0,\cdot)} |u_{tt}(0,\cdot)|^2\, \d \Vol(x).
\ee
This integral admits the estimate
\begin{align*}
 \int_\Omega    e^{2\tau\phi(0,\cdot)} |u_{tt}(0,\cdot)|^2\, \d \Vol(x) &= - \int_0^T \int_\Omega \frac{\partial}{\partial s} \left(   e^{2\tau\phi(s,\cdot)}|u_{tt}(s,\cdot)|^2             \right)\d\Vol(x)\, \d s + \int_\Omega e^{2\tau\phi(T,\cdot)} |u_{tt}(T,\cdot)|^2\, \d \Vol(x) \\
 & \le C   \int_Q e^{2\tau\phi} \left(  \tau |u_{tt}|^2 +\tau^{-1}|u_{ttt}|^2       \right)     \d\sigma +C,
\end{align*}
because $\phi(T,\cdot)\le0$, and by the Cauchy inequality. This inequality, together with \r{p3}, estimate the integral of the first term in the r.h.s.\ of \r{p2}. Therefore,
\[
\tau \int_{Q_\eps}e^{2\tau \phi}  \left(\tau^2|u|^2+  \tau^2 |u_{tt}|^2+  |u_{ttt}|^2 \right) \,\d\sigma 
 \le  C\left( 
 \tau^{-1}\int_Q   \left( \tau^2 |u_{tt}|^2+  |u_{ttt}|^2 \right)  \,\d\sigma + e^{2\tau \eps}
\right).
\]
Split the integration on the right into $Q_\eps$ and $Q\setminus Q_\eps$. For $\tau\gg1$, the integral over $Q_\eps$ will be absorbed by the l.h.s. On $Q\setminus Q_\eps$, we have $
e^{2\tau\phi}\le e^{2\tau\eps}$. Therefore,
\[
\tau \int_{Q_\eps}e^{2\tau \phi}  |u|^2\,\d\sigma 
 \le  C e^{2\tau\eps}, \quad \text{for $\tau\gg1$}. 
\]
Thus,
\[
\int_{Q_\eps}e^{2\tau (\phi-\eps)}  |u|^2\,\d\sigma 
 \le  C/\tau , \quad \text{for $\tau\gg1$}. 
\]
Since $\phi-\eps\ge0$ in $Q_\eps$, letting $\tau\to\infty$, we get $u=0$ in $Q_\eps$. Since $\eps>0$ is arbitrary, we get $u=0$ in $Q$. This proves the theorem.
\end{proof}

The following lemma will allow us below to apply the proof of  the theorem to a larger class of non-smooth boundaries. 

\begin{lemma}\label{cor-1}
Let $D\subset\R^n$ be open, and assume that near each point $x_0\in \bo$, $D$ is represented by $y\ge0$ and $z\ge0$, where $y$, and $z$ are functions with non-zero differentials, satisfying the following condition
\[
\text{if $\{y=0\}$ and $\{z=0\}$ intersect, then $\{y=0\}\setminus \{z=0\}$ is dense in $\{y=0\}$.}
\]
Let $u\in C^2(\bar D)$ has Cauchy data $0$ on $\partial D$ in the sense that extended as $0$ outside $D$, it still belongs to $C^2$. Then $u$ can be approximated by $C_0^2(D)$ functions in $H^2(\R^n)$. 
\end{lemma}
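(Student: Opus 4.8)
The plan is to reduce to a local statement near $\partial D$ by a partition of unity, and then, near each boundary point, to push the support of the localized function strictly into $D$ along a vector field transversal to $\partial D$, and mollify. First extend $u$ by zero to $\tilde u\in C^2(\R^n)$. Since near $\partial D$ the set $D$ is cut out by $y\ge0$ and $z\ge0$, the hypothesis that the zero extension is $C^2$ forces every derivative of $\tilde u$ of order $\le2$ to vanish on $\{y=0\}$ and on $\{z=0\}$; in particular $|\tilde u(x)|=O(d(x)^2)$ and $|\nabla\tilde u(x)|=O(d(x))$, where $d(x):=\dist\big(x,\{y=0\}\cup\{z=0\}\big)$. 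We may assume $\bar D$ is compact (otherwise first multiply $\tilde u$ by a cutoff equal to $1$ on a large ball, which costs nothing in $H^2$). Cover $\bar D$ by finitely many open sets: one relatively compact in $D$, and finitely many $U_j$ around $\partial D$ on which the above local structure holds, and let $\{\chi_j\}$ be a subordinate partition of unity with $\chi_0$ supported in $D$. Then $\tilde u=\sum_j\chi_j\tilde u$, the $j=0$ term is already in $C_0^2(D)$, and it suffices to approximate each $w:=\chi_j\tilde u$, $j\ge1$, which is $C^2$, compactly supported in $\overline{D\cap U_j}$, and still vanishes to second order on the two faces.

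Fix such a $w$, work in $U=U_j$, and write $C=\{y=0\}\cap\{z=0\}$ for the edge. The first step is to cut out a neighborhood of $C$. Choose $\theta_\delta\in C_0^\infty$ with $\theta_\delta=0$ near $C$, $\theta_\delta=1$ off the $\delta$-neighborhood $N_\delta(C)$, and $|\nabla^k\theta_\delta|\le C_k\delta^{-k}$. On $N_\delta(C)$ one has $d(x)\le\dist(x,C)\le\delta$ since $C\subset\{y=0\}\cup\{z=0\}$, hence $|w|\lesssim\delta^2$ and $|\nabla w|\lesssim\delta$ there; computing $\|(1-\theta_\delta)w\|_{H^2(\R^n)}$ term by term — the worst contribution, $|\nabla^2\theta_\delta|\,|w|\lesssim\delta^{-2}\delta^2=O(1)$, is supported in $N_\delta(C)\cap\supp w$, a set whose measure tends to $0$ — shows $\theta_\delta w\to w$ in $H^2(\R^n)$ as $\delta\to0$. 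This is where the density hypothesis enters: it prevents $C$ from containing an open piece of a face, on which $w$ would vanish only to second order but along a set of codimension one, too large to be cut out in $H^2$.

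It remains to approximate, for each fixed $\delta$, the function $w_\delta:=\theta_\delta w$, which is $C^2$, compactly supported in $\overline{D\cap U}$ at a positive distance from $C$, and still has vanishing Cauchy data on $\partial D$. Away from $C$ the faces $\{y=0,z\ge0\}$ and $\{z=0,y\ge0\}$ are disjoint, so one can choose a smooth vector field $V$ near $\supp w_\delta$ with $Vy>0$ near the portion of $\{y=0\}$ meeting $\supp w_\delta$ and $Vz>0$ near the portion of $\{z=0\}$ meeting it — for instance by patching $\nabla y$ and $\nabla z$ with a partition of unity, using that on $\supp w_\delta$ at most one of $|y|,|z|$ can be small. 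If $\Phi_t$ denotes the flow of $V$, then for small $t>0$ one checks $\Phi_t(\supp w_\delta)\Subset D$: points of $\supp w_\delta$ lying on a face move off it into $\{y>0,z>0\}$, interior points stay interior, and everything remains in $U$. Hence $w_\delta\circ\Phi_{-t}\in C_0^2(D)$, and $w_\delta\circ\Phi_{-t}\to w_\delta$ in $H^2(\R^n)$ as $t\to0^+$, since pullback by a smooth flow is continuous on $H^2$. Summing the resulting approximants of the $\chi_j\tilde u$ over $j$ produces functions in $C_0^2(D)$ converging to $\tilde u$ in $H^2(\R^n)$; a further mollification would give $C_0^\infty(D)$, although only $C_0^2(D)$ is claimed. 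The main obstacle is the edge $C$: showing it can be removed with no $H^2$ loss and that the inward-pointing vector field exists near it — precisely the purpose of the density assumption.
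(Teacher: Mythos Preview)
Your proof is correct but follows a different strategy from the paper's. The paper uses a single product cutoff: near $x_0\in\partial D$ it sets
\[
u_\eps=(1-\chi(y/\eps))(1-\chi(z/\eps))u,\qquad \chi\in C_0^\infty(\R),\ \chi=1\text{ near }0,
\]
and shows directly that $u_\eps\to u$ in $H^2$. The worst terms after two differentiations are $\eps^{-2}\chi''(y/\eps)u$ and the cross term $\eps^{-2}\chi'(y/\eps)\chi'(z/\eps)u$. For the latter the paper argues algebraically: since $u=0$ on $\{z=0\}$ and $\d z\neq0$, write $u=zu_1$; then $zu_1=0$ on $\{y=0\}$ forces $u_1=0$ on $\{y=0\}\setminus\{z=0\}$, and the \emph{density hypothesis} extends this to all of $\{y=0\}$, giving $u=yzu_2$. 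With that factorization, each bad term is a bounded function times $\chi'(y/\eps)$ or $\chi''(y/\eps)$, which tend to $0$ in $L^2$.

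Your two-step route---excise a tube $N_\delta(C)$ around the edge, then flow the remainder inward along a transversal vector field---is more geometric and cleanly separates the corner difficulty from the smooth-boundary case. Both arguments land in $C_0^2(D)$. One remark: your explanation of where the density hypothesis enters is not quite right. You say that without it $C$ could contain an open piece of a face, ``too large to be cut out in $H^2$''; but your own estimate, $|\nabla^2\theta_\delta|\,|w|\lesssim\delta^{-2}\delta^2=O(1)$ on a set whose $n$-dimensional measure tends to $0$, goes through regardless, since $C\subset\{y=0\}$ always has Lebesgue measure zero in $\R^n$. In fact your argument never actually uses the density hypothesis. (The paper uses it precisely to pass from $u_1|_{\{y=0,\,z\neq0\}}=0$ to $u_1|_{\{y=0\}}=0$.) This is harmless for correctness---it just means your proof establishes the conclusion under slightly weaker hypotheses than stated.
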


\begin{proof}
Let $\chi\in C_0^\infty(\R)$ be such that $\chi=1$ near $0$. Set locally, near $x_0\in \partial D$, 
\[
u_\eps = (1-\chi(y/\eps))(1-\chi(z/\eps))u.
\]
Using a partition of unity, we define such $u_\eps\in C_0^\infty(D)$. We claim that $u_\eps\to u$ in $H^2(\R^n)$, as $\eps\to 0+$.  Take the second derivatives of $u-u_\eps$ to see that we need to show that the following terms converge to $0$ in $L^2(\R^n)$:
\[
\eps^{-1}\chi'(y/\eps)u, \quad \eps^{-2}\chi''(y/\eps)u, \quad \eps^{-2}\chi'(y/\eps)\chi'(z/\eps) u,
\]
where we used the fact that the derivatives of $y$ and $z$ are bounded. We list terms involving lower powers of $\eps^{-1}$ and derivatives of $u$, which analysis is similar. Similarly,  we will not analyze the zero and the first order derivatives of $u_\eps$. 
Since $y/\eps$ and $z/\eps$ are bounded on the support of $\chi'(y/\eps)$ and $\chi'(z/\eps)$, respectively, we may replace the leading coefficient $\eps^{-1}$ in the first term by $y^{-1}$, etc. Since $u=0$ for $z=0$, and $\d y\not=0$, we have $u=zu_1$, where $u_1$ is a smooth function. Next, $u_1=0$ for $y=0$ at least when $z\not=0$. That set of $y$'s however is dense in $\{y=0\}$ by assumption. By continuity, $u_1=0$ for $y=0$. Thus $u_1=yu_2$ with $u_2$ smooth, therefore, $u=zyu_2$. Now, the proof of the claim follows from the fact that $\chi''(y/\eps)$  tends to $0$ in $L^2(\R^n)$; and this is also true if we replace $\chi''$ by  $\chi'$ or $\chi$. 
\end{proof}

We recall next  a unique continuation result due to Tataru \cite{Tataru99}, see also \cite[Theorem~4]{SU-thermo}. Assume that a locally $H^1$ function $u$ solves the homogeneous wave equation $Pu=0$ (near the set indicated in \r{ucT} below) and vanishes in a neighborhood of $[-T,T]\times \{x_0\}$ for some $x_0$ and $T>0$. Then 
\be{ucT}
u(t,x)=0\quad \text{for}\quad |t|+\dist(x_0,x)< T.
\ee
One can formulate a uniqueness continuation statement along a curve in the $x$ space.

\begin{proposition}\label{pr_uc}
Let $[0,T]\ni \sigma \mapsto c(s)$ be a  smooth curve in $\R^n$ so that $c(0)=x_0$ for some $x_0$, and $\sigma$ is a unit speed parameter in the metric $g$. Let $u=0$ near $[-T,T]\times \{x_0\}$. Then 
\be{ucT2}
u(t,x) =0 \quad \text{near}\quad \{(t,x); \; x=c(\sigma), |t|\le T-\sigma  \},
\ee
provided that $u$ solves the homogeneous wave equation $Pu=0$ near that set. 
\end{proposition}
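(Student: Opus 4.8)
The plan is to propagate the vanishing of $u$ along the curve $c$ by iterating Tataru's unique continuation result \r{ucT}, covering the curve by finitely many overlapping short intervals. The key point is that at each step we have already established that $u$ vanishes near $[-T',T']\times\{c(\sigma_0)\}$ for an appropriate $T'$, so Tataru's theorem can be re-applied from the base point $c(\sigma_0)$ to pick up a double-cone neighborhood of that point, and in particular vanishing near $[-(T'-\eta),T'-\eta]\times\{c(\sigma_0+\eta)\}$ for small $\eta>0$ because $\dist(c(\sigma_0),c(\sigma_0+\eta))\le\eta$ (the curve has unit speed).

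More precisely, I would argue as follows. Let $S\subset[0,T]$ be the set of $s$ such that $u=0$ near $\{(t,x);\; x=c(\sigma),\ |t|\le T-\sigma,\ 0\le\sigma\le s\}$. By hypothesis $u=0$ near $[-T,T]\times\{x_0\}$, and $x_0=c(0)$, so $0\in S$; thus $S$ is nonempty. The set $S$ is closed by continuity of $c$ and the fact that the vanishing set is closed. To see $S$ is open in $[0,T]$: if $s_0\in S$ with $s_0<T$, then in particular $u=0$ near $[-(T-s_0),T-s_0]\times\{c(\sigma_0)\}$ with $\sigma_0=s_0$; applying \r{ucT} with $x_0$ replaced by $c(\sigma_0)$ and $T$ replaced by $T-s_0$, we get $u(t,x)=0$ for $|t|+\dist(c(\sigma_0),x)<T-s_0$. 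Since $\dist(c(\sigma_0),c(\sigma_0+\eta))\le\eta$ for the unit-speed curve, this contains a neighborhood of $[-(T-s_0-\eta),T-s_0-\eta]\times\{c(\sigma_0+\eta)\}$ for all sufficiently small $\eta>0$; combined with the inductive hypothesis up to $\sigma_0$, this shows $s_0+\eta\in S$ for $\eta$ small, so $S$ is open. By connectedness $S=[0,T]$, which is the desired conclusion \r{ucT2}, modulo checking that the local statements glue into a neighborhood of the whole set $\{x=c(\sigma),|t|\le T-\sigma\}$ — which follows from a standard compactness/finite-subcover argument on the compact parameter interval.

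One technical point to be careful about: Tataru's theorem as quoted requires $Pu=0$ near the indicated double cone, and we are only assuming $Pu=0$ near the set in \r{ucT2}. So at each step the cone $\{|t|+\dist(c(\sigma_0),x)<T-s_0\}$ on which we want to conclude vanishing must itself lie in the region where $Pu=0$; this is where the shape of the target set in \r{ucT2} (a ``shrinking'' family of time-intervals over the curve) is used, since the cone emanating from $c(\sigma_0)$ with half-width $T-\sigma_0$ stays inside the union of the cones over nearby curve points. I expect this bookkeeping — verifying that the cones produced at each step stay within the prescribed neighborhood so that the hypothesis $Pu=0$ keeps applying — to be the main obstacle, though it is more a matter of careful geometry than of hard analysis. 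The analytic content is entirely contained in Tataru's theorem \r{ucT}.
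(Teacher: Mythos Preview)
Your proposal is correct and follows essentially the same approach as the paper, whose own proof is a one-line sketch (``cover the curve $c$ with small balls and prove unique continuation step by step'') together with a reference to \cite{SU-thermo_brain}. Your open--closed connectedness argument is the standard repackaging of that finite-cover idea; the technical concern you raise about where $Pu=0$ must hold for Tataru's cone is legitimate but is not addressed in the paper's sketch either---in every application of the proposition in the paper, $Pu=0$ in fact holds on a full space-time slab, so the issue does not arise.
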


The proof of the proposition is contained in the proof or \cite[Theorem~6.1]{SU-thermo_brain}. The idea is to cover the curve $c$ with small balls and to prove  unique continuation step by step.

Next theorem in fact follows from Theorem~\ref{thm_uq2} below but its proof is simpler, and serves as a basis for the proof of Theorem~\ref{thm_uq2}. We refer to Figure~\ref{fig:carleman5} for an illustration.

\begin{figure}[tbp] 
  \centering
  \includegraphics[bb=7 1 334 215,width=2.39in,height=1.56in,keepaspectratio]{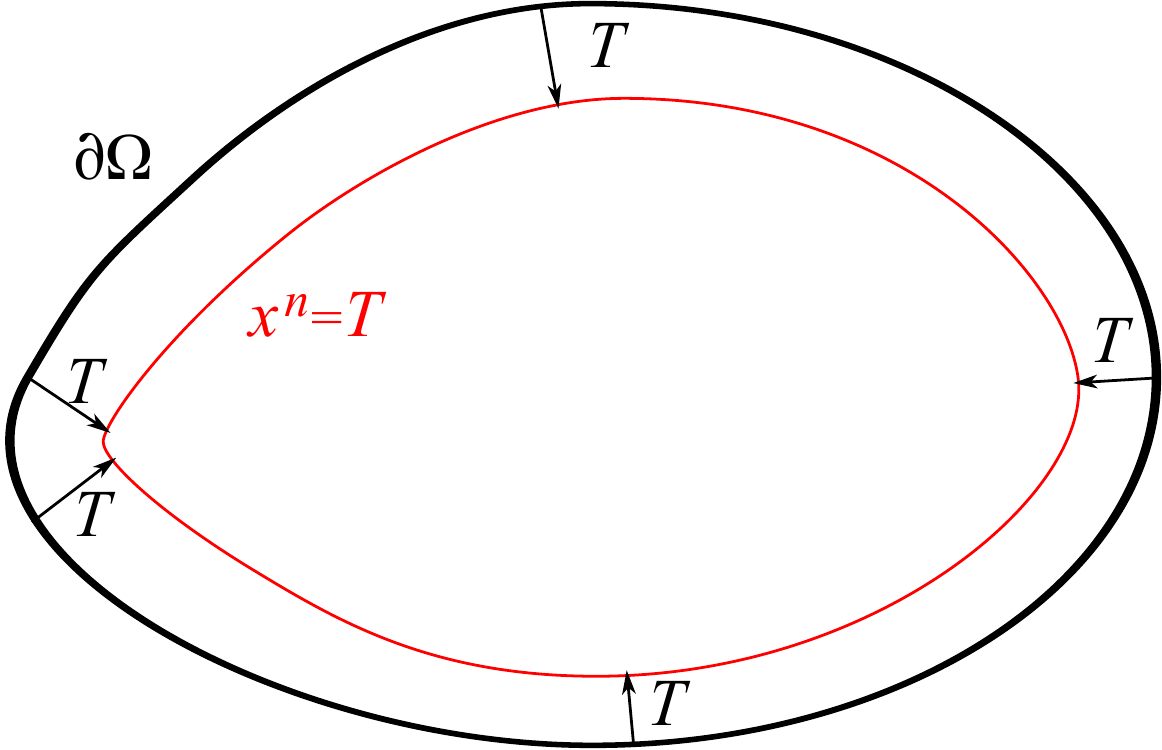}
  \caption{Theorem~\ref{thm_2}}
  \label{fig:carleman5}
\end{figure}

\begin{theorem} \label{thm_2}
Assume that $\bo$ is strictly convex.  
Let $T>0$ be such that $x^n :=\dist(x,\bo)$ is a  smooth function in $\Omega$ with non-zero differential for $0\le x^n\le T$; and $\{x^n=s\}$, $0\le s<T$ are strictly convex surfaces. 
Let $u$ solve \r{p1} and the function $a$ satisfies \r{alpha} for $0\le x^n\le T$. Assume also the regularity conditions \r{reg}. 
Then if   $u$ has zero Cauchy data on $(-T,T)\times \bo$, we also have 
\be{thm2.1}
F(x)=0\quad\text{for $x\in \Omega$, $\dist(x,\bo)<T$}.
\ee
\end{theorem}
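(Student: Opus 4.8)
The plan is to derive the theorem from Theorem~\ref{thm_uq1} by an \emph{incremental} argument along the convex foliation $\{x^n=s\}$, $0\le s<T$, chaining the Carleman steps together by Tataru's sharp unique continuation (Proposition~\ref{pr_uc}). A single application of Theorem~\ref{thm_uq1} cannot work: the weight one is led to is $\phi=e^{\mu\psi}-1$ with $\psi=(C-x^n)^2-\delta t^2-s_0$, $0<\delta<1$, and this is strongly pseudoconvex only when $C-x^n>\delta/\kappa_0$ on the region in play, where $\kappa_0>0$ bounds below the principal curvatures of the level surfaces; so $C$ must exceed the depth one is aiming at by about $1/\kappa_0$. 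On the other hand the hypothesis $\phi(\pm T',\cdot)\le0$ used in the proof of Theorem~\ref{thm_uq1} lets $\psi$ ``descend'' by only $\delta T'^2$, and to reach depth $\sigma$ at $t=0$ from the Cauchy data on $\bo$ (with the full $T'=T$) one would need $\delta T^2\ge\sigma(\sigma+2/\kappa_0)$, which fails as $\sigma\to T$. The cure is to run the later steps on thin slabs $\Omega'=\{\sigma-\eta<x^n<\sigma+\delta_*+\eta\}$ away from $\bo$, with the \emph{shrunk} time interval $(-(T-\sigma),T-\sigma)$: on such a slab $\psi$ is only slightly positive at the inner edge, of size $(\delta_*+\eta)(\delta_*+3\eta+2/\kappa_0)$, which is $\le\delta(T-\sigma)^2$ once $\delta_*,\eta$ are small.

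The core is the following step, which I would establish first. \textbf{Claim:} if $F=0$ in $\{x^n<\sigma\}$ for some $\sigma\in[0,T)$, then $F=0$ in $\{x^n<\sigma+\delta_*\}$ for some $\delta_*>0$. If $\sigma>0$ then $Pu=aF=0$ in $(-T,T)\times\{x^n<\sigma\}$; combining this with the zero Cauchy data on $(-T,T)\times\bo$ --- extend $u$ by zero past $\bo$ and apply Proposition~\ref{pr_uc} along the inward unit-speed normal geodesics of $g$ --- gives $u=0$ in $\{(t,x):x^n(x)<\sigma,\ |t|+x^n(x)<T\}$, so in particular $u$ has zero Cauchy data on $(-(T-\sigma),T-\sigma)\times\{x^n=\sigma-\eta\}$ for every small $\eta>0$. (For $\sigma=0$ there is no inner edge: take $\eta=0$, $\Omega'=\{x^n<2\delta_*\}$, $T'=T$, and use the given data on $(-T,T)\times\bo$.) I would then apply Theorem~\ref{thm_uq1} on $Q'=(-T',T')\times\Omega'$, $T'=T-\sigma$, with the $\psi$ above, $C=\sigma+\delta_*+\eta+1/\kappa_0$, $s_0=(\eta+1/\kappa_0)^2$, and $\mathcal G$ the part of $(-T',T')\times\{x^n=\sigma-\eta\}$ (resp.\ of $(-T',T')\times\bo$) on which $\phi\ge0$. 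Condition (ii): writing $r:=C-x^n$ and using $|\d x^n|_g=1$ (so $Gx^n=\langle v,\nabla x^n\rangle$ and $G^2x^n$ vanishes in the normal direction) together with strict convexity in the form $G^2x^n\le-\kappa_0|v^\top|^2$, $v^\top$ the part of $v$ tangent to the level surface, one gets $G^2(r^2/2)=(Gx^n)^2-rG^2x^n\ge|\xi|^2\min\!\big(1,r\kappa_0\big)\ge|\xi|^2$ on $\bar\Omega'$, i.e.\ \r{1.2a}; condition (i): $|\d(r^2/2)|=r=\sqrt{\delta t^2+s_0}\neq\delta|t|$ on $\{\psi=0\}$ since $s_0>0$, i.e.\ \r{1.2A}. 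One checks $\{\phi(0,\cdot)>0\}\cap\Omega'=\{\sigma-\eta<x^n<\sigma+\delta_*\}$, $\phi<0$ on the outer face $\{x^n=\sigma+\delta_*+\eta\}$, $\phi(t,\cdot)\le\phi(0,\cdot)$, and, for $\delta_*,\eta$ small depending only on $T-\sigma$, $\kappa_0$, $g$, that $\phi(\pm T',\cdot)\le0$ on $\Omega'$. Theorem~\ref{thm_uq1} then yields $F=0$ in $\{\sigma-\eta<x^n<\sigma+\delta_*\}$, hence $F=0$ in $\{x^n<\sigma+\delta_*\}$.

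To conclude, put $\Sigma^*=\sup\{\sigma\in[0,T]:F=0 \text{ in }\{x^n<\sigma\}\}$. The Claim at $\sigma=0$ gives $\Sigma^*>0$, and $F=0$ in $\{x^n<\Sigma^*\}$ since this property passes to the union over $\sigma<\Sigma^*$. If $\Sigma^*<T$, then $T-\Sigma^*>0$ and the Claim at $\sigma=\Sigma^*$ produces $F=0$ in $\{x^n<\Sigma^*+\delta_*\}$, contradicting the definition of $\Sigma^*$. Hence $\Sigma^*=T$, which is \r{thm2.1}. I expect the main obstacle to be exactly the mismatch described above: there is no single pseudoconvex weight reaching the whole collar at $t=0$ while being $\le0$ at $t=\pm T$, so the result genuinely needs iteration, and the point that makes the iteration close is the use of Tataru's \emph{geometry-free} unique continuation to rebuild zero Cauchy data on an interior level surface before each Carleman step; the remaining bookkeeping --- the shrinking windows $T-\sigma$ and step sizes $\delta_*\sim c\kappa_0(T-\sigma)^2$, and the verification of $\phi(\pm T',\cdot)\le0$ --- is routine but must be arranged so the steps accumulate to depth exactly $T$.
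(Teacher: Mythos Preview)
Your proposal is correct and follows essentially the same route as the paper: the same weight $\psi=(R-x^n)^2-\delta t^2-s$ built from boundary normal coordinates, the same verification of \r{1.2}--\r{1.2A} via the second fundamental form of the level surfaces (your $1/\kappa_0$ is the paper's $R$), the same observation that a single Carleman step only reaches depth $\eps$ with $T\sim\sqrt{2\eps R}$, and the same iteration that uses Proposition~\ref{pr_uc} to manufacture zero Cauchy data on the inner level surface $\{x^n=\sigma\}$ for $|t|<T-\sigma$ before reapplying Theorem~\ref{thm_uq1}, closed off by the identical supremum argument. Your bookkeeping is a bit more explicit (the two-sided slab $\Omega'$ and the check that $\phi<0$ on its outer face and at $t=\pm T'$), but there is no substantive difference.
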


\begin{proof}
Let $(x',x^n)$ be normal boundary coordinates near $\bo$ with $s_0$ fixed. Here $x^n$ is the signed distance to $\bo$ so that $x^n>0$ in $\Omega$. The function $x^n$ is defined in a small neighborhood of $\bo$ while $x'$ are local coordinates near some  boundary point.  
The metric $g$ then takes the form
\[
g_{\alpha\beta}(x',x^n)\d x^\alpha\d x^\beta + (\d x^n)^2,
\]
$\alpha, \beta\le n-1$. 
Given $f(x)$, independent of the dual variable $\xi$, we have
\be{p4a}
G^2 f =  \left( \xi^i \frac{\partial}{\partial x^i} - \Gamma_{jk}^i \xi^j\xi^k \frac{\partial}{\partial \xi^i}  \right) \xi^\ell  \frac{\partial}{\partial x^\ell} f = 
\frac{\partial^2 f}{\partial x^i\partial x^j} \xi^i \xi^j  - \Gamma_{jk}^i \xi^j\xi^k  \frac{\partial f}{\partial x^i} .
\ee
Here, $\Gamma_{jk}^n = -\frac12\partial g_{jk}/\partial x^n$ is the second fundamental form $\II>0$ of the level sets of $x^n$,  w.r.t.\ the chosen orientation,  and it is zero when either $i=n$ or $j=n$. 
Assume now that $f$ is a function of $x^n$ only. 
Restrict \r{p4a} to $\bo=\{x^n=0\}$ to get
\[
G^2 f\big|_{x^n=0} = \left(
\frac{\partial^2 f}{\partial (x^n)^2}   (\xi^n)^2 -\II(x')(\xi',\xi') \frac{\partial f}{\partial x^n} \right)\Big|_{x^n=0}.
\]
To satisfy \r{1.2} for $f=r^2/2$, we need
\be{p5}
\frac{\partial^2 f}{\partial  (x^n)^2}   \ge1, \quad  -\frac{\partial  f}{\partial x^n}\ge R\quad \text{for $x^n=0$},
\ee
where $1/R$ is the minimum over $\bo$ of the smallest eigenvalue (principal curvature) of the second fundamental form $\II$. We can think of $R$ as the largest curvature radius of $\bo$; and by assumption, $0<R<\infty$.  Then the following function satisfies \r{1.2}:
\[
r(x) = R-x^n,
\]
because then  $f := r^2/2= (R-x^n)^2/2$ clearly  satisfies \r{p5}. 
Therefore, the function 
\be{p_phi}
\psi_s(t,x) := (R-x^n)^2-\delta t^2-s
\ee
is strongly pseudoconvex, assuming also that $\psi_s=0$ is non-characteristic. Also, the last inequality in \r{p1a} holds. 
 Note that in Example~\ref{ex1}, if $\bo=\{x;\; |x|=R_0\}$, then $x^n= R_0-|x|$, and one can choose $R=R_0$. Then $\psi=|x|^2-\delta t^2-s$, which is the phase function in the example. As in the example, we can show that $\{\psi_s=0\}$ is non-characteristic for $s>0$. In fact, we will work locally near $x^n=t=0$, and $s=R$, and clearly, $\psi_s$ is non-characteristic there.

Fix $0<\eps\ll1$, and let  $0<x^n<\eps$.  We restrict $s$ to the interval $(R-\eps)^2\le s\le  R^2$.  This choice of the phase function corresponds to pseudoconvex surfaces given by 
\be{p6}
(R-x^n)^2-\delta t^2 = s, \quad (R-\eps)^2\le s\le  R^2, \quad 0\le x^n \le \eps<R. 
\ee
In $\R_t\times\bar\Omega_x$, this restricts $t$ to $|t|=O(\sqrt{\eps})$, see also \r{p7} below.  
We will apply Theorem~\ref{thm_uq1} with a the phase function $\phi_s :=\exp(\mu\psi_s)-1$, $\mu\gg1$,  with $s$ as in \r{p6}. On $\mathcal{G} := (-T,T)\times\bo$, we have $\psi_s|_{x^n=0}= R^2-\delta t^2-s$. To have $\phi_s<0$ outside $\mathcal{G}$, we need $\psi_s<0$ there, and therefore $R^2-\delta T^2<s$ for 
all  $s$ as in \r{p6}. Therefore,  if
\be{p7}
T>\sqrt{ R^2-(R-\eps)^2 } =\sqrt{2\eps R} +O(\eps^2),
\ee
always true when $\eps\ll1$, 
we can  apply the theorem for $\delta<1$ close enough to $1$. Therefore, we get that if $u$ has zero Cauchy data on $(-T,T)\times\bo$ with $T$ as in \r{p7}, then $F(x)=0$ for $\psi_s(0,x)>0$ for any $s$ as before, i.e., for $0\le x^n<\eps $. Note that this does not prove the theorem yet, even when $T$ is small enough so that we can have equality in \r{p7} with some $\eps$ satisfying the smallness requirements. The reason is that we get $F=0$ in a much smaller region: $0\le x^n\le \eps$ instead of $0\le x^n\le T$ because 
for small $\eps$, we have $T\sim \sqrt{\eps}\gg\eps$.  Also, $T\gg1$ when $R$ is large, i.e., when $\bo$ is close to a flat surface at some point and direction.  In other words, the price that we pay with $T$ to push $\supp F$ by $\eps$ depends on the (largest) radius of the curvature of $\bo$, and this is not what we are trying to prove. We will use this argument as an incremental step only, and will prove the theorem by applying a finite number of such steps.

To get the sharp time $T$, not necessarily small,  we notice that we just proved that if  $u$ has zero Cauchy data on $(-T,T)\times\bo$ with $T$ as in \r{p7} and  $\eps\ll1$, then $F(x)=0$ for $x\in \Omega$, $\dist(x,\bo)<\eps$. Then $Pu=0$ in the same domain and $|t|<T$, by \r{p1}.  By  unique continuation, see Proposition~\ref{pr_uc}, 
\be{p7b}
 u(t,x)=0 \quad\text{for $x\in\Omega$, $\dist(x,\bo)+|t|<T$, $\dist(x,\bo)\le \eps$}.
\ee
In particular,
\be{p8}
\partial_x^\alpha u(t,x)=0 \quad\text{for $x\in\Omega$, $\dist(x,\bo)=\eps$, $|t|<T-\eps$, $|\alpha|\le1$},
\ee
provided that $T>\eps$. 

Let $\tilde\eps$ be the supremum of all $\eps$ for each the following statement holds: if $u$ has zero Cauchy data on $(-T,T)\times\bo$, then $F(x)=0$ when  $\dist(x,\bo)<\eps$. Then $\tilde \eps$ has that property as well. 
If $\tilde\eps<T$, then by the argument above, see \r{p8}, $u$ has zero Cauchy data on $(-T+\tilde\eps,T-\tilde\eps) \times\bo$. Then we can repeat the argument leading to \r{p7b} to reduce $\supp f$ even further, and this would contradict the choice of $\tilde\eps$. Therefore, $\tilde \eps=T$. 
\end{proof}

Recall that given two subsets $A$ and $B$ of a metric space, the distance $\dist(A,B)$ is defined by 
\be{dist}
\dist(A,B) =  \sup(\dist(a,B);\; a\in A).
\ee
This function is not symmetric in general, and the Hausdorff distance is defined as 
\[
\dist_{\rm H}(A,B) = \max\left( \dist(A,B), \dist(B,A)           \right).
\]
Let $\Omega_1\Supset\Omega$ be another domain so that $\bo_1$ is given by $\dist(x,\bo)=\eps\ll1$. 
Let $\Sigma_s$, $s_1\le s\le s_2$ be a continuous family of compact oriented surfaces in $\Omega_1$. 
Examples include also surfaces that are not closed in $\bar\Omega$ but can be extended as closed ones in the larger domain $\Omega_1$. By a continuous family, we mean a family so that the Hausdorff distance $\dist_{\rm H}(\Sigma_s,\Sigma_{s_0})$ tends to $0$, as $s\to s_0$, $\forall s_0$. We assume that each $\Sigma_s$ divides $\Omega_1$ in two (open) parts:  
one, in the direction of the normal giving the orientation, that  we denote by $\Sigma_s^\text{int}$; and another one that contains $\bo_1$, that we denote by $\Sigma_s^\text{ext}$.

Let $\Gamma\subset\bo$ be a relatively open subset of $\bo$. Set 
\be{i1}
\mathcal{G} := \left\{  (t,x); \; x\in \Gamma, \, 0<t<\tau(x)    \right\},
\ee
where $\tau$ is a fixed continuous function on $\Gamma$. This corresponds to measurements taken at each $x\in\Gamma$ for the time interval $0<t<\tau(x)$. One special case  is  $\tau(x)\equiv T$, for some $T>0$; then $\mathcal{G}= [0,T]\times\Gamma$. 

\begin{figure}[h] 
  \centering
  \includegraphics[bb=0 0 709 214,width=4.76in,height=1.44in,keepaspectratio]{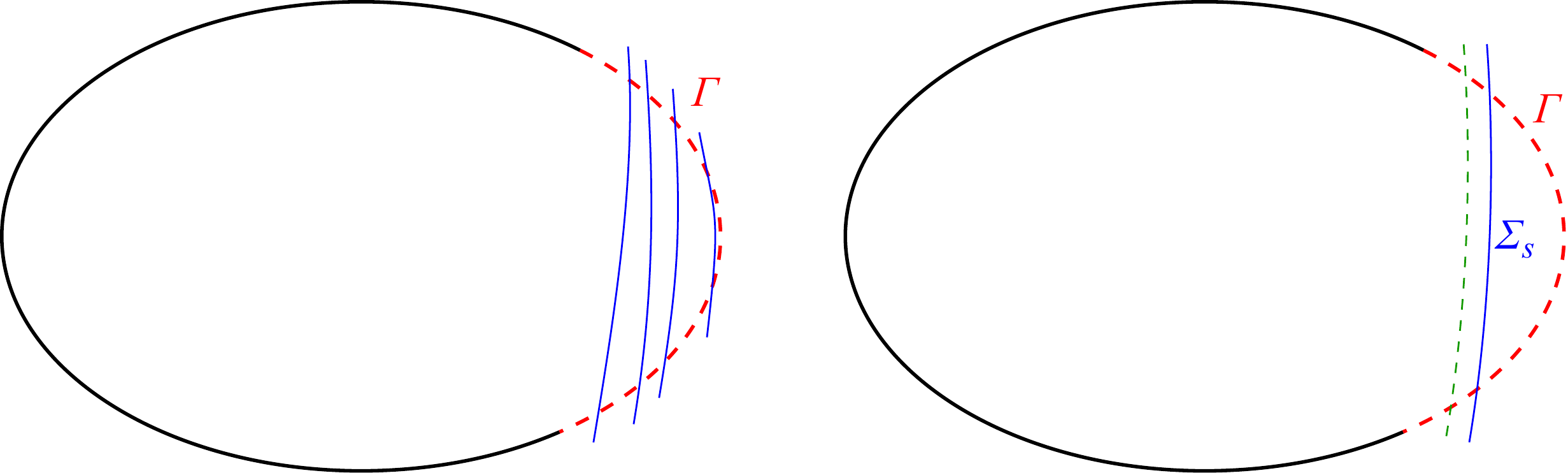}
  \caption{An illustration to Theorem~\ref{thm_uq2} and its proof. Left: the family $\Sigma_s$. Right: The ``inductive'' step of the proof. The r.h.s.\ curve is $\Sigma_s$; the doted one to the left is at distance $\eps$ from  $\Sigma_s$.}
  \label{fig:carleman4}
\end{figure}

\begin{theorem}\label{thm_uq2}
Let $\Omega_1$, $\mathcal{G}$, and $\Sigma_s$ be as above. Let $u$ solve \r{p1} and have zero Cauchy data on $\mathcal{G}$. Let the regularity conditions \r{reg} and the  ellipticity condition \r{alpha} be satisfied on $K := (\cup \Sigma_s)\cap\bar\Omega$.  Assume that 
\begin{itemize}
  \item [(a)] 
  $\supp F\subset \Sigma_{s_1}^\text{\rm int}$, 
  \item[(b)] $\Sigma_s\cap \bar\Omega$ is strictly convex for any $s$, 
  \item[(c)] $\Sigma_s \cap (\bo\setminus \Gamma) = \emptyset$ for any $s$.
\end{itemize}
Assume that  
\be{thm_uq_cX}
\text{for any $x\in \Sigma_{s}\cap \bar\Omega$, there is $y\in \Gamma$ so that  $\tau(y)> \dist(x, y)$}. 
\ee
Then 
\be{thm2.2}
\supp F \cap \Sigma_s=\emptyset \quad \forall s.
\ee
\end{theorem}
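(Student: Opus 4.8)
The plan is to prove \r{thm2.2} by a continuation argument in the parameter $s$, following the two–ingredient scheme of the proof of Theorem~\ref{thm_2}: repeatedly, a Carleman estimate (Theorem~\ref{thm_uq1}) pushes $\supp F$ a little across the current convex surface, and then the unique continuation Proposition~\ref{pr_uc} turns this gain into fresh vanishing of $u$, on which the next stage is based. Concretely, writing $N(s):=\overline{\Sigma_s^{\text{\rm ext}}}\cap\bar\Omega$, I would set
\[
I:=\{s\in[s_1,s_2]\ :\ \supp F\cap\Sigma_{s'}=\emptyset\ \text{ for all }\ s'\in[s_1,s]\},
\]
and note, using (a), that $\supp F$ is closed and disjoint from the compact set $\Sigma_{s_1}$, and Hausdorff continuity of the family, that $I$ is a subinterval of $[s_1,s_2]$ containing a neighbourhood of $s_1$; that $s\in I$ forces $\supp F\subset\Sigma_s^{\text{\rm int}}$ (a connectedness argument — $\supp F$ starts in $\Sigma_{s_1}^{\text{\rm int}}$ and never meets any $\Sigma_{s'}$, $s'\le s$, so it stays on the side where it started), hence $F=0$ on $N(s)$; and that $\sup I\in I$ (for every $s'<\sup I$ we have $F=0$ on $N(s')$, hence on $\bigcup_{s'<\sup I}N(s')$, whose closure contains $\Sigma_{\sup I}\cap\bar\Omega$, and $\supp F$ is closed). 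Then \r{thm2.2} reduces to showing $\sup I=s_2$, which I would do by assuming $s^*:=\sup I<s_2$ and producing $\eta>0$ with $[s_1,s^*+\eta]\subset I$, a contradiction. (In fact $\sup I=s_2$ also gives $\supp F\subset\Sigma_{s_2}^{\text{\rm int}}$, i.e.\ $F=0$ on $K$.)

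For the Carleman step near $\Sigma_{s^*}$, I would use that $\Sigma_{s^*}\cap\bar\Omega$ is compact and, by (b), strictly convex, so the construction in \r{p4a}--\r{p_phi} applies with the level sets of $x^n$ replaced by those of the signed distance $d$ to $\Sigma_{s^*}$ (positive on $\Sigma_{s^*}^{\text{\rm int}}$): with $R$ larger than the largest curvature radius of $\Sigma_{s^*}\cap\bar\Omega$, $r:=R-d$, $\psi_\sigma:=r^2-\delta t^2-\sigma$ and $\phi_\sigma:=\exp(\mu\psi_\sigma)-1$ ($0<\delta<1$ close to $1$, $\mu\gg1$), one gets $G^2(r^2/2)\ge|\xi|^2$ near $\Sigma_{s^*}\cap\bar\Omega$ so $\phi_\sigma$ is strongly pseudoconvex there, non-characteristic because $|\d(r^2/2)|=|r|=\sqrt{\sigma+\delta t^2}>\delta|t|$ on $\{\psi_\sigma=0\}$ for $\sigma>0$, and $\phi_\sigma(t,\cdot)\le\phi_\sigma(0,\cdot)$. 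Extending $\psi_\sigma$ to be $<0$ away from $\Sigma_{s^*}\cap\bar\Omega$ while keeping $\{\psi_\sigma(0,\cdot)>0\}=\{d<R-\sqrt\sigma\}$ and the pseudoconvexity, and applying Theorem~\ref{thm_uq1} for $\sigma$ ranging over a short interval below $R^2$, would yield $F=0$ on $N(s^*)$ together with a one-sided collar $\{0<d<\kappa\}$ of $\Sigma_{s^*}$ of some definite width $\kappa>0$; by Hausdorff continuity of the family this forces $\supp F\subset\Sigma_{s^*+\eta}^{\text{\rm int}}$, hence $[s_1,s^*+\eta]\subset I$, for all sufficiently small $\eta>0$.

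To make Theorem~\ref{thm_uq1} legitimately applicable I must check $\phi_\sigma<0$ on $\big((-T,T)\times\bo\big)\setminus\mathcal{G}$, and to run the step at the next level I must supply the corresponding Cauchy data; this is where (c) and \r{thm_uq_cX} enter. Condition (c) gives $\dist(\Sigma_{s^*},\bo\setminus\Gamma)>0$, so for $\sigma$ close to $R^2$ the set $\{\psi_\sigma(0,\cdot)\ge0\}$ misses $\bo\setminus\Gamma$ and $\phi_\sigma<0$ there after the extension. For the part over $\Gamma$, I would use that $F=0$ on $\Sigma_{s^*}^{\text{\rm ext}}\cap\bar\Omega$, so $Pu=0$ there, and that $u$ has zero Cauchy data on $\{x\in\Gamma,\ 0\le t<\tau(x)\}$ (on $\mathcal{G}$ by hypothesis, and at $t=0$ because $u|_{t=0}=0$ forces $u$ and $\partial_\nu u$ to vanish on $\bo$ at $t=0$); then, via the cone condition \r{thm_uq_cX} and Proposition~\ref{pr_uc} applied where it is valid — namely to the odd-in-$t$ extension of $u$, which near any point where $F=0$ is a genuine $C^2$ solution of $Pu=0$ (since there $u_{tt}|_{t=0}=a(0,\cdot)F=0$) and which vanishes symmetrically near $\{y\}\times(-\tau(y),\tau(y))$ once $u$ vanishes near $\{y\}\times(0,\tau(y))$ — I would propagate the vanishing of $u$ from $\Gamma$ along geodesics into $\Sigma_{s^*}^{\text{\rm ext}}\cap\bar\Omega$, reaching a full space-time neighbourhood of $\Sigma_{s^*}\cap\bar\Omega$ for a definite range of positive times. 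This lets me run the Carleman estimate on a slightly smaller domain whose boundary near $\bo$ lies inside the set where $u$ already vanishes, so that the part of that boundary not matched by $\mathcal{G}$ carries zero Cauchy data anyway (equivalently, the effective data set is enlarged enough for the sign condition on $\phi_\sigma$ to hold); and the same mechanism applied once more, to the collar $\{d<\kappa\}$ produced above, gives $u=0$ near $\{0\le d\le\kappa'\}$ for some $0<\kappa'<\kappa$, hence zero Cauchy data on $\Sigma_{s^*+\eta}\cap\bar\Omega$, so the whole step can be repeated from $s^*+\eta$.

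I expect this last point to be the main obstacle: reconciling the Carleman weight — which, peaking at $t=0$, forces $\{\phi_\sigma(0,\cdot)>0\}$ to avoid the entire cylinder $(-T,T)\times\bo$ — with the fact that $\mathcal{G}$ is only a proper, time-truncated subset of it. This is exactly where (c), the cone condition \r{thm_uq_cX}, the identity $u|_{t=0}=0$, and a preliminary application of Proposition~\ref{pr_uc} (both to relay the vanishing of $u$ from $\Gamma$ into $\Omega$ and to shrink the domain on which the Carleman estimate is run) must all be combined, and where the new ``cone'' phenomenon that may shrink the reachable region originates. By contrast, the convexity hypothesis (b) enters only through the essentially local computation already carried out in \r{p4a}--\r{p_phi} for Theorem~\ref{thm_2}, and the bookkeeping in $s$ (the supremum argument) is routine.
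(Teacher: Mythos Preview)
Your overall scheme---supremum argument in $s$, with Proposition~\ref{pr_uc} propagating vanishing of $u$ from $\Gamma$ to the current surface $\Sigma_{s^*}$ and then the Carleman construction \r{p_phi} from the proof of Theorem~\ref{thm_2} pushing $\supp F$ across $\Sigma_{s^*}$ by a definite collar---is exactly the paper's argument. The paper's proof is far terser: it fixes $s$ with $\supp F\subset\overline{\Sigma_s^{\text{int}}}$, uses \r{thm_uq_cX} and Proposition~\ref{pr_uc} to obtain zero Cauchy data on $\Sigma_s\cap\bar\Omega$ for $|t|\ll1$, runs the local Carleman step on $D=\Sigma_s^{\text{int}}\cap\Omega$, and closes with the same supremum argument you describe.

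Two points where you diverge. First, the paper does \emph{not} introduce an odd-in-$t$ extension; it applies Proposition~\ref{pr_uc} directly to $u$ on $(-T,T)$, in effect reading the Cauchy data on $\mathcal{G}$ as symmetric in $t$ (this is the only way the sentence ``by assumption, this is also true on $\Gamma$'' makes sense, and matches the sole application in Section~\ref{sec_thermo}, where the even extension supplies exactly that symmetry). Your odd-extension route is unnecessary under that reading and actually breaks at the Carleman step: if $v$ is the odd extension of $u|_{t>0}$, then $Pv=\tilde aF$ with $\tilde a$ the odd extension of $a$, so $\tilde a(0,\cdot)=0$ violates \r{alpha} and $\tilde a$ is discontinuous at $t=0$ whenever $a(0,\cdot)\neq0$, so \r{reg} fails; and if instead you apply the Carleman estimate to $u$ itself, the vanishing you have produced via $v$ is only for $t>0$, not for the full two-sided interval the estimate \r{1.CE} requires. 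Drop the extension and take the data symmetric.

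Second, the paper handles the corner where $\Sigma_s$ meets $\bo$ (inside $\Gamma$, by (c)) not by shrinking the domain but by invoking Lemma~\ref{cor-1} on $D=\Sigma_s^{\text{int}}\cap\Omega$: that lemma is precisely what guarantees the cut-off $\chi u$, with zero Cauchy data on both pieces of $\partial D$, can be approximated in $H^2$ by $C_0^\infty$ functions, which is what \r{1.CE} needs. Your ``shrink the domain so its boundary lies where $u$ vanishes'' idea is a plausible alternative, but the paper's route via Lemma~\ref{cor-1} is what the lemma was written for.
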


\begin{proof} In this proof, we regard $F$ as a function supported in $\bar\Omega$.  
Fix  $s\in [s_1,s_2]$, and assume that 
\be{p9}
\supp F\subset \overline{\Sigma_{s}^\text{int}}.
\ee
Then   for any $x\in \Sigma_{s}\cap \bar\Omega$, $\tau(x)>  \dist(x, y)$ for some $y\in\Gamma$.   
By the unique continuation statement of Proposition~\ref{pr_uc}, $u$ has zero Cauchy data on $\Sigma_{s}\cap \bar\Omega$ for $|t|\ll1$, and by assumption, this is also true on $\Gamma$.

Let $x^n$ be a boundary normal coordinate to $\Sigma_{s}$ positive in $\Sigma_{s}^\text{int}$. Let $\psi(t,x)$ be as in \r{p_phi}, and $\eps>0$ be as in the proof of Theorem~\ref{thm_2}. The function $\phi=\exp(\mu\phi)$, $\mu\gg1$, is guaranteed to be pseudoconvex only for $x$ in an $O(\eps)$ neighborhood of $\Sigma_{s}\cap  \bar\Omega$, and for $|t|=O(\sqrt{\eps})$, if $\eps\ll1$, by (b). We  apply Lemma~\ref{cor-1} to the set $D = \Sigma_{s}^\text{int}\cap \Omega$ to 
conclude by (b) and (c), that $F=0$ in some neighborhood of $\Sigma_{s}\cap \bar\Omega$, see Figure~\ref{fig:carleman4}.           

Let now $s_0$ be the supremum of all $s\in [s_1,s_2]$ for which \r{p9}  holds. The latter set is non-empty, by (a). By the continuity of $s\mapsto \Sigma_s$, \r{p9} holds for $s=s_0$. Indeed, assuming that \r{p9} does not hold for $s=s_0$, we can find $0<\eps\ll1$,  so that \r{p9}  does not hold for $s_0-\eps\le s\le s_0$. That contradicts the choice of $s_0$ to be the least upper bound. On the other hand, by what we proved above, $s_0$ cannot be an upper bound, unless $s=s_2$. This completes the proof.
\end{proof} 

\begin{remark}\label{rem_cond0}
The meaning of condition \r{thm_uq_cX} is to guarantee that any point $x$ where we want to prove $F(x)=0$ is reachable from $\Gamma$ (from some point $y$) at a time not exceeding $\tau(y)$. In other words, there is a ``signal'' (a unit speed curve) issued from $x$ that will reach the observation part $\Gamma$ of $\bo$ at a time while we are still making measurements there. By finite speed of propagation, it is a necessary condition, as well.
\end{remark} 

\begin{remark}\label{rem_cond}
A sufficient but an easier to formulate condition to replace \r{thm_uq_cX} is  
\be{thm_uq_c2}
\mathcal{G}= [0,T]\times\Gamma\quad \text{with}\quad T> \max_s \dist(\Sigma_{s}\cap \bar\Omega, \Gamma),
\ee
see \r{dist}. An even simpler sufficient condition is
\be{thm_uq_c3}
\mathcal{G}= [0,T]\times\Gamma\quad \text{with}\quad T>   \dist( \Omega, \Gamma).
\ee
\end{remark}

\subsection{Examples} 
\begin{example} \label{ex1a}
Let $\Omega\subset \R^2$ be a bounded domain, and let $g$ be a metric on $\bar\Omega$. Assume that $\bo $ is convex in the metric, and that there is $x_0\in \bo$, so that all geodesics issued from $\bo$, pointing into $\bar\Omega$, exit $\bar\Omega$ after some fixed time. That property does not depend on the way we extend $g$ outside $\bar\Omega$. All simple (see \cite{SU-JAMS} for a definition) $(\Omega,g)$ have this property, and all non-trapping convex ones have it, too. We will show that in this case we can cover  $\bar\Omega$ by a foliation of smooth surfaces (curves, actually) $\Sigma_s$ that are a small perturbation of the geodesics through $x_0$. 

\begin{figure}[h] 
  \centering
  \includegraphics[bb=10 0 376 240,width=2.5in,height=1.64in,keepaspectratio]{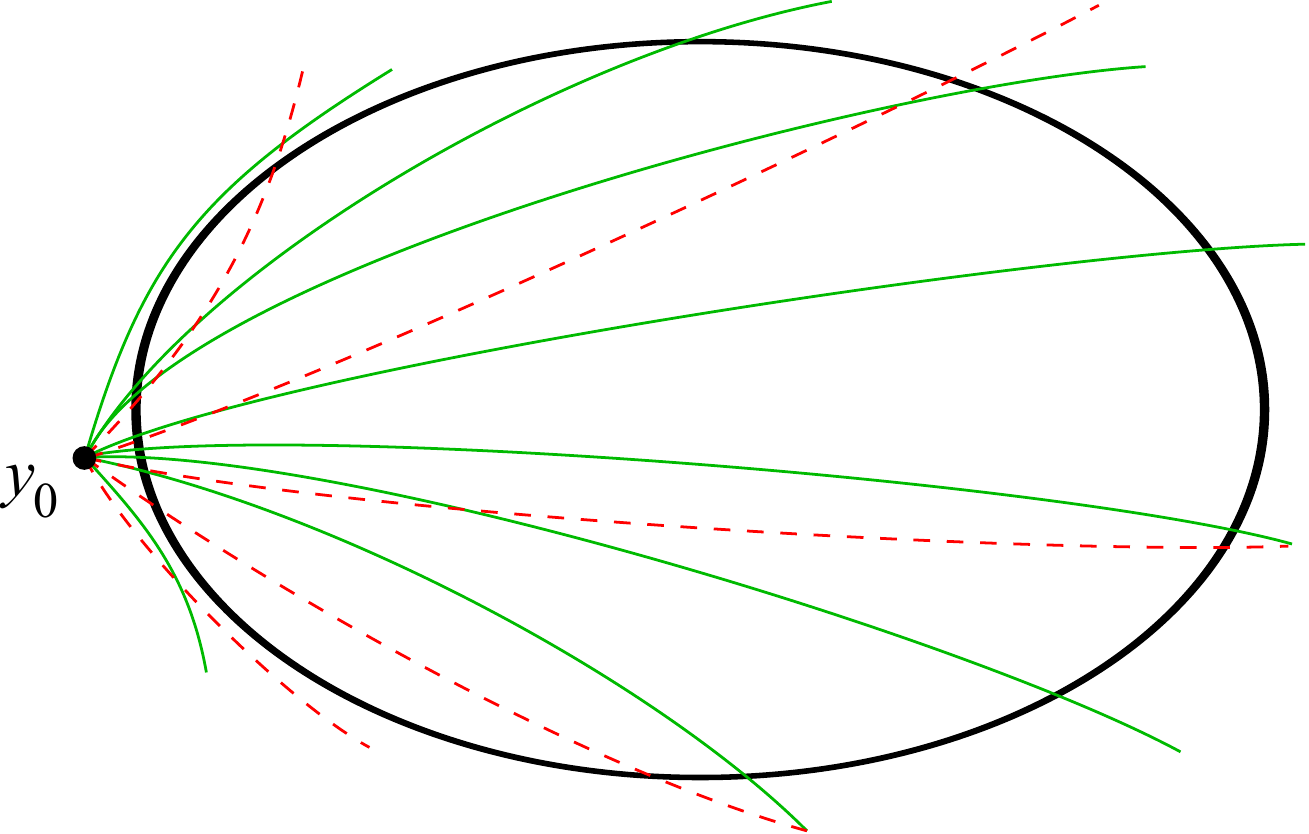}
  \caption{Example~\ref{ex1a}}
  \label{fig:carleman1}
\end{figure}

Extend $g$ in a small neighborhood $\Omega_1$ of $\bar\Omega$, and let $y_0\not\in \bar\Omega$ be close to $x_0$ so that the geodesics through $y_0$ have the same property but in $\Omega_1$. Choose global coordinates in the latter as normal coordinates centered at $y_0$. In those coordinates, the geodesics through $y_0$ are the lines, i.e., they solve $x''=0$. Let at each $x$,  $J$ be the rotation operator in the tangent space given by $Jv= (-v_2, v_1)$, where we used the standard index raising/lowering  convention, and vector $Jv$ is identified with the covector on the r.h.s.  Given $0<\delta\ll1$, define the curves $\Sigma_s$ as solutions of $x''=-\delta Jx'$. The parameter $s$ measures the angle of the initial direction at $y_0$ with a fixed direction. Then $\Sigma_s$ are strictly convex when viewed from the side determined by the normal  $Jx'$. In Figure~\ref{fig:carleman1}, this is the upper side. We can always extend the curves $\Sigma_s$ to closed ones with the extension being outside $\Omega$.  Assume now that $u$ has zero Cauchy data. Then we can apply Theorem~\ref{thm_uq2} to conclude that $F=0$ when $T$ is appropriately chosen. A possible choice of $T$ is the diameter of $\bar\Omega$, given by $\max(\dist(x,y);\; x,y\in \bar\Omega)$.  To optimize $T$, we can consider a similar  family, with $\delta$ negative. In  Figure~\ref{fig:carleman1}, they are shown as dashed curves. 
Then the ``interior'' and the ``exterior'' are reversed. The two families converge to the set of the geodesics through $y_0$, as $|\delta|\to0$. Then the value for $T$ enough to apply the theorem can be obtained by finding  a geodesic $\gamma_0$ through $y_0$ so that maximum of the distances from  $\gamma_0\cap \bar\Omega$ to the upper and the lower side of $\bo$ is maximized; then $T$ is that value. 

Let $\Omega$ be an ellipse, and let $g$ be Euclidean. If $y_0$ is one of the vertices on the major (minor) axis, then $\gamma_0$ is the major (minor) axis, and it is enough to take $T$  to be a half of the length of the major (minor) axis. 
 \end{example}

\begin{example}\label{ex2}
Let $\Omega\subset\R^2$ be as above. Assume that there exists a closed non-self-intersecting geodesic $\gamma_0$ of the metric $g$. Assume that the region between $\bo$ and $\gamma_0$ can be foliated by a continuous family of strictly convex curves $\Sigma_s$. Then $\supp F$ is contained inside $\gamma_0$, if  $T=\dist(\gamma_0,\bo)$. Our analysis does not allow us extend the equality $F=0$ inside.

\begin{figure}[h] 
  \centering
  \includegraphics[bb=12 12 303 206,width=2.3in,height=1.53in,keepaspectratio]{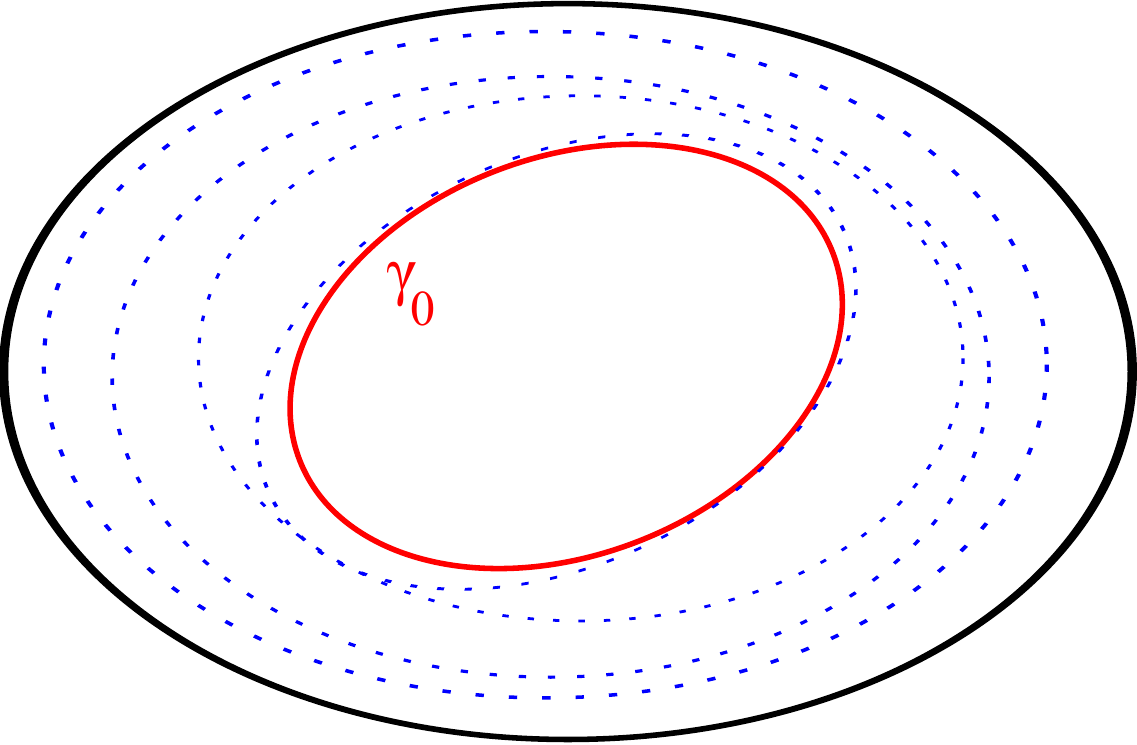}
  \caption{Example~\ref{ex2}}
  \label{fig:carleman2}
\end{figure}
\end{example}

\begin{example}  \label{ex_5} 
Let $\Omega\subset\R^2$ be diffeomorphic to a disk, and let $\Gamma$ be a relatively open connected part of $\bo$. Fix a metric $g$ in some neighborhood of $\bar\Omega$. We do not need to assume that the whole $\bo$ is convex but we will assume that there is a continuous family of geodesics, with endpoints outside $\bar\Omega$, covering the region between $\Gamma$ and the geodesic connecting the endpoints of $\Gamma$, see also Figure~\ref{ex1a}. In Figure~\ref{fig:carleman3}, this is the unshaded region. The latter assumption is fulfilled if for example one of those points has the property that all geodesics issued from it, and pointed inside $\Omega$, leave the unshaded region after some fixed time. In particular, $(\Omega,g)$ being non-trapping suffices. Then we can perturb those geodesics to curves that are convex (bending to the shaded region), as in Example~\ref{ex1a} to show that when $T$ is chosen in an appropriate way, $\supp F$ must be in the shaded region. 

\begin{figure}[h] 
  \centering
  \includegraphics[bb=0 0 327 215,width=2.5in,height=1.64in,keepaspectratio]{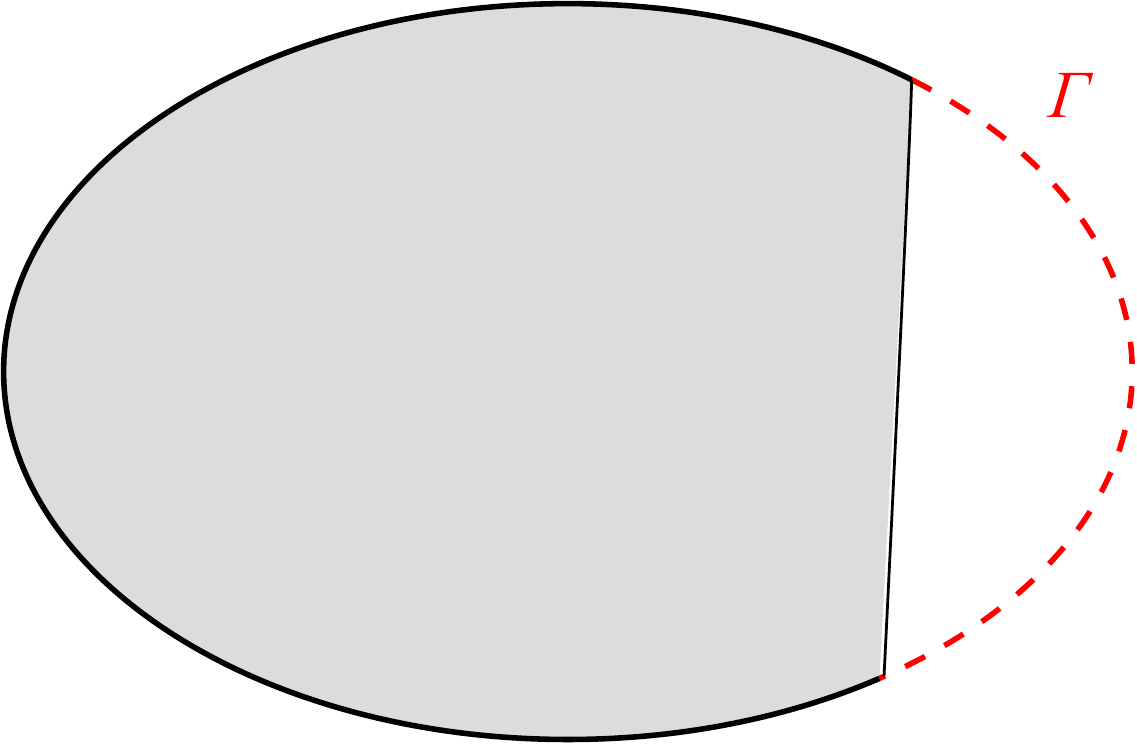}
  \caption{Example~\ref{ex_5}}
  \label{fig:carleman3}
\end{figure}

An higher dimensional analog of this example would be $\Omega$ diffeomorphic to a ball with  $\Gamma\subset\bo$ diffeomorphic to a disk on $\bo$. Then $F=0$ in the region covered by families of convex surfaces. For example, let $\Omega$ be the ellipsoid $\sum (x^i)^2/a_i^2=1 $, and let $g$ be Euclidean. Let $\Gamma= \bo\cap \{x^1>C\}$ with $0<C<a_1$. Then  $F=0$ in $\Omega\cap\{x^1>C\}$, and it is enough to choose $T=a_1-C$. 
That $T$ may or may not be sharp, depending on all $a_j$ and $C$. 
\end{example}

\section{A non-linear problem of recovery of a speed with one  measurement. Applications to thermoacoustics}\label{sec_thermo} 
In section~\ref{sec_un1}, we showed that one can uniquely recover $f$ when $t$ varies over a symmetric interval $[-T,T]$, and $u|_{t=0}$ is known. No knowledge of $u_t|_{t=0}$ is required. 
Assume now that $t$ varies over the interval $[0,T]$, and $a$, $u$ admit even extensions for $t\in [-T,T]$ of  regularity as in the preceding section. In particular, this means  that $u_t|_{t=0}=0$. In other words, $u$ solves
\begin{equation}   \label{p1'}
\left\{
\begin{array}{rcll}
Pu &=&a(t,x)F(x) &  \mbox{in $(0,T)\times \Omega$},\\
u|_{t=0}&=&0& \mbox{in  $\Omega$},\\
u_t|_{t=0}&=&0& \mbox{in  $\Omega$},
\end{array}
\right.               
\end{equation}
compare with \r{w2}. 
Then one has obvious corollaries of the results of the previous section that we are not going to formulate. One can interpret those results as a recovery of a source, given $a$. If $a=1$, then one can differentiate the equation above w.r.t.\ $t$ and to reduce the problem to recovery of an initial condition for the homogeneous wave equation, that is the classical thermoacoustic problem of recovering $f$ given $\Lambda f$, see \r{1b} below, with a known speed.

\subsection{The thermoacoustic model}
Let $u$ solve the problem \r{1A} 
where $c=c(x)>0$ is smooth and $T>0$ is fixed. Note that the wave equation is solved in the whole $\R^n$ now. 

Assume that $f$ is supported in $\bar\Omega$, where $\Omega\subset \R^n$ is some smooth bounded domain. Assume also that $c=1$ outside $\Omega$. This is not an essential assumption; it is enough $c$ to be known and fixed outside $\Omega$. 
The measurements are modeled by the operator
\be{1b}
\Lambda f : = u|_{[0,T]\times\partial\Omega}.
\ee
The problem is to reconstruct the unknown $c$ and $f$, if possible. We will study now the case when $f$ is known, and we want to reconstruct $c$.

\subsection{Uniqueness results for the linear problem}
Let $(c,f)$, $(\tilde c,\tilde f)$ be two pairs, and let $u$, $\tilde u$  be the corresponding solutions of \r{1A}. Then
\begin{equation}   \label{w1}
\left\{
\begin{array}{rcll}
(\partial_t^2 -c^2\Delta)(\tilde u-u) &=&(\tilde c^2-c^2)\Delta \tilde u &  \mbox{in $(0,T)\times \R^n$},\\
(\tilde u-u)|_{t=0} &=&0,\\ \quad \partial_t (\tilde u-u)|_{t=0}& =&0.
\end{array}
\right.               
\end{equation}
Then 
\[
\big( \tilde \Lambda    - \Lambda \big) f = (\tilde u-u)\big|_{[0,T]\times \bo}.
\]
We have 
\be{w1a}
(\delta\Lambda) f :=  \big( \tilde \Lambda    - \Lambda \big) f =   w\big|_{[0,T]\times \bo},
\ee
where $w$ solves \r{w2} with  $F$ and $a$ as in \r{Fa}. 
Then $\supp F\subset\bar\Omega$, and  given the regularity of $c$ and $\tilde c$, we also have $F=0$ on $\bo$. 
The measurement \r{w1a} however determines the Dirichlet data on $[0,T]\times\bo$ only. The Neumann data can be recovered from that however, see also \cite[sec.~7]{SU-thermo_brain}, and Lemma~\ref{lemma_C} below. We emphasize again that $w$ solves the wave equation for $x$ in the whole $\R^n$, and this is what allows us to recover the Neumann data. 

We assume below that $w$ solves the more general problem
\begin{equation}   \label{w2A}
\left\{
\begin{array}{rcll}
(\partial_t^2 -\Delta_g)w &=&a(t,x)F(x)&  \mbox{in $(0,T)\times \R^n$},\\
w|_{t=0} &=& 0,\\ \quad \partial_t w|_{t=0}& =&0,
\end{array}
\right.               
\end{equation}
where $g$ is a smooth Riemannian metric that is Euclidean outside $\Omega$. 
In some of the results below, we do not assume that $F$ and $a$ are given by \r{Fa}.

We introduce the energy space associated  with the wave equation below, to be able to deal both with metrics and variables speeds. Write the metric $g$ as $g=c^{-2}g_0$, where $c(x)>0$. In the thermoacoustic case, one usually takes $g_0$ to be Euclidean. 
Let $\Delta_{g_0}$ be the Laplace-Beltrami operator as above but related to $g_0$.   
Modulo lower order terms, $c^{2}\Delta_{g_0}$ and $\Delta_g$ coincide. 
Write $P$ in the form
\be{P1}
P=\partial_t^2-A, \quad A= c^2\Delta_{g_0}+ \text{lower order terms},
\ee
compare with \r{P_def}. 
Notice first that $c^2\Delta_{g_0}$ is formally self-adjoint w.r.t.\ the measure $c^{-2}\d \Vol$.   
Given a domain $U$, and a function $u(t,x)$, define the energy
\[
E_U(t,u) = \int_U\left( |\nabla_x u|_0^2   +c^{-2}|u_t|^2 \right)\d \Vol_0(x),
\]
where $|\nabla_x u|_0$ is the norm in the metric $g_0$, and $\d\Vol_0$ is the volume measure w.r.t.\ $g_0$ as well. 
In particular, we define the space $H_{D}(U)$ to be the completion of $C_0^\infty(U)$ under the Dirichlet norm
\be{2.0H}
\|f\|_{H_{D}}^2= \int_U |\nabla_x u|_0^2 \,\d \Vol_0(x).
\ee
It is easy to see that $H_{D}(U)\subset H^1(U)$, if $U$ is bounded with smooth boundary, therefore, $H_{D}(U)$ is topologically equivalent to $H_0^1(U)$. If $U=\R^n$, this is true for $n\ge3$ only.  By the finite speed of propagation, the solution with compactly supported Cauchy data always stays in $H^1$ even when $n=2$.  
The energy norm for the Cauchy data $[f_1,f_2]$, that we denote by $\|\cdot\|_{\mathcal{H}}$ is then defined by
\[
\|[f,f_2]\|^2_{\mathcal{H}} = \int_U\left( |\nabla_x f_1|_0^2    +c^{-2}|f_2|^2 \right)\d \Vol_0(x).
\]
This defines the energy space 
\[
\mathcal{H}(U) = H_D(U)\oplus L^2(U).
\] 
Here and below, $L^2(U) = L^2(U; \; c^{-2}\d \Vol_0)$. Note also that 
\be{Pf}
\|f\|^2_{H_D} = (-A f,f)_{L^2}.
\ee
The wave equation then can be written down as the system
\be{s1}
\mathbf{u}_t= \mathbf{A}\mathbf{u}, \quad \mathbf{A} = \begin{pmatrix} 0&I\\A&0 \end{pmatrix}, 
\ee
where $\mathbf{u}=[u,u_t]$ belongs to the energy space $\mathcal{H}$. The operator $\mathbf{A}$ then extends naturally to a skew-selfadjoint operator on $\mathcal{H}$. We denote by $\mathbf{U}(t)$ the group $\exp(t\mathbf{A})$. 
In this paper, we will deal with either $U=\R^n$ or $U=\Omega$. In the latter case, the definition of $H_D(U)$ reflects Dirichlet boundary conditions.


We will define next  the outgoing DN map. Given $g\in C_0^\infty((0,\infty)\times \bo)$, let $w$ solve the exterior mixed problem with $c=1$:
\begin{equation}   \label{1w}
\left\{
\begin{array}{rcll}
(\partial_t^2 -\Delta)v &=&0 &  \mbox{in $(0,T)\times \R^n\setminus\bar\Omega$},\\
v|_{[0,T]\times\bo}&=&g,\\
v|_{t=0} &=& 0,\\ \quad \partial_t v|_{t=0}& =&0.
\end{array}
\right.               
\end{equation}
Then we set
\[
Ng = \frac{\partial w}{\partial\nu} \Big|_{[0,T]\times\bo}.
\]
By  \cite{LasieckaLT}, for $g\in H^1_{(0)}([0,T]\times \bo)$, we have $[w,w_t]\in C([0,T);\;  \mathcal{H}(\Omega))$; therefore, 
\[
N: H^1_{(0)}([0,T]\times \bo) \to C([0,T]\times H^\frac12(\bo))
\]
is continuous, where the subscript $(0)$ indicates the closed subspace of functions vanishing at $t=0$. Note  that the results in  \cite{LasieckaLT} require the domain to be bounded but by finite domain of dependence we can remove that restriction in our case.  We also refer to \cite[Proposition~2]{finchPR} for  a sharp domain of dependence result for exterior problems.  

When $F$ and $a$ are given by \r{Fa}, the next lemma follows directly from its version  \cite[sec.~7]{SU-thermo_brain} for $\Lambda f$ by subtracting $\tilde\Lambda f$ and $\Lambda f$.  

\begin{lemma}\label{lemma_C}
Let $w$ solve \r{w2} with $F$  supported in $\bar\Omega$,  and let $t\mapsto a(t,\cdot)F(\cdot)\in L^2(\Omega)$ be continuous.  Assume that $c=1$ 
outside $\Omega$. Then for any $T>0$,  $w|_{[0,T]\times \bo}$ determines uniquely  the normal derivative of $w$ on $[0,T] \times \bo$ as follows:
\be{N1}
\frac{\partial w}{\partial\nu}\Big|_{[0,T]\times\bo} = N\left( w|_{[0,T]\times \bo}\right).
\ee
\end{lemma}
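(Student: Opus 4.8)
The goal is to show that the exterior solution $w$ (which, by hypothesis, solves the homogeneous wave equation with $c=1$ in the complement of $\bar\Omega$ and has zero Cauchy data at $t=0$ there) is recovered outside $\Omega$ from its Dirichlet trace on $[0,T]\times\bo$, so that the Neumann trace is then obtained by applying the outgoing DN map $N$. First I would observe that $F$ is supported in $\bar\Omega$ and the metric $g$ is Euclidean outside $\Omega$, so the restriction $v:=w|_{(0,T)\times(\R^n\setminus\bar\Omega)}$ satisfies $(\partial_t^2-\Delta)v=0$ in $(0,T)\times(\R^n\setminus\bar\Omega)$ with $v|_{t=0}=\partial_t v|_{t=0}=0$ and $v|_{[0,T]\times\bo}=w|_{[0,T]\times\bo}=:g$. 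Thus $v$ solves exactly the exterior mixed problem \r{1w} with boundary data $g$; by uniqueness of solutions to that forward problem, $v$ is determined by $g$, and hence $\partial v/\partial\nu|_{[0,T]\times\bo}=Ng$. Since $w$ and $v$ agree in a neighborhood of $\bo$ from the exterior side, and since the normal derivative appearing in \r{N1} is the one-sided limit from outside $\Omega$ (the exterior normal derivative), we conclude $\partial w/\partial\nu|_{[0,T]\times\bo}=N(w|_{[0,T]\times\bo})$, which is \r{N1}.

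The only subtlety is a regularity/approximation point: the definition of $N$ in the excerpt is given for boundary data $g\in H^1_{(0)}([0,T]\times\bo)$ via \cite{LasieckaLT}, whereas here $g$ is only the trace of a solution $w$ whose source $a(t,\cdot)F(\cdot)$ is merely continuous into $L^2(\Omega)$. I would address this by noting that $w$, as the solution of \r{w2} with such a source and zero Cauchy data, has $[w,w_t]\in C([0,T];\mathcal{H}(\R^n))$ by the energy estimate for the inhomogeneous wave equation, so by the trace theorem its Dirichlet trace on $[0,T]\times\bo$ lies in the relevant space (and vanishes at $t=0$), hence $N$ applies to it; alternatively, one may cite directly \cite[sec.~7]{SU-thermo_brain}, where exactly this statement is proved for $\Lambda f=u|_{[0,T]\times\bo}$ with $u$ solving \r{1A}, and obtain the present lemma by linearity, subtracting the identities for $\tilde\Lambda f$ and $\Lambda f$ and using $w=\tilde u-u$, $F=\tilde c^2-c^2$, $a=\Delta\tilde u$ as in \r{Fa}.

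The main (and essentially only) obstacle is therefore bookkeeping rather than substance: making sure the trace of $w$ has enough regularity for $N$ to be defined on it, and making sure the exterior problem \r{1w} is well-posed on the unbounded domain $\R^n\setminus\bar\Omega$ — the latter is handled by finite speed of propagation, which reduces matters to a bounded region and lets one invoke \cite{LasieckaLT} together with the sharp exterior domain-of-dependence result \cite[Proposition~2]{finchPR}, exactly as remarked just before the lemma. Once these points are in place, the proof is the one-line observation that $w$ restricted to the exterior solves the same exterior mixed problem that defines $N$, so its Neumann trace must be $N$ of its Dirichlet trace.
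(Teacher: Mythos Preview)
Your proposal is correct and follows essentially the same approach as the paper: both arguments identify the restriction of $w$ to the exterior with the unique solution of the exterior mixed problem \r{1w} having boundary data $g=w|_{[0,T]\times\bo}$, and then read off the Neumann trace as $Ng$. The paper phrases this by defining $v$ via \r{1w} and noting $v-w$ has zero Cauchy and Dirichlet data in the exterior, hence vanishes; you phrase it by restricting $w$ and invoking uniqueness directly---the content is the same, and your discussion of the regularity and domain-of-dependence points matches what the paper sketches in the paragraph preceding the lemma.
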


\begin{proof}
Let $v$ be the solution of \r{1w} with $g=w|_{[0,T]\times \bo}$. The latter is in $H^1_{(0)}([0,T]\times \bo)$. Let $w$ be the solution of \r{w2}. Then $v-w$ solves the unit speed wave equation in $[0,T]\times \R^n\setminus\Omega$ with zero Dirichlet data  and zero initial data. Therefore, $v=w$ in $[0,T]\times \R^n\setminus\Omega$. 
\end{proof}

With this in mind, we have the following version of Theorem~\ref{thm_2} in this context. Notice that in the two theorems below, we do not assume $F$ and $a$ to be given by \r{Fa}.

\begin{theorem} \label{thm_2t} 
Assume that $\bo$ is strictly convex,  $w$ solves \r{w2} and the function $a$ satisfies the elliptic condition \r{alpha} in the closure of the set \r{thm2.1t} below. Let $a$ and $w$ admit  even extensions satisfying \r{reg}. 
Let $T>0$ be such that $x^n :=\dist(x,\bo)$ is a well defined smooth function in $\Omega$ with non-zero differential; and $\{x^n=s\}$, $0\le s<T$ are strictly convex surfaces. 
Then if   $w=0$ on $[0,T]\times \bo$, we also have 
\be{thm2.1t}
F(x)=0\quad\text{for $x\in \Omega$, $\dist(x,\bo)<T$}.
\ee
\end{theorem}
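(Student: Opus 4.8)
The plan is to reduce this statement to Theorem~\ref{thm_2} by first recovering the vanishing of the Neumann data and then passing to an even-in-$t$ extension. First I would observe that, because $w=0$ on $[0,T]\times\bo$, the exterior mixed problem \r{1w} with boundary data $g=w|_{[0,T]\times\bo}=0$ has only the trivial solution $v\equiv 0$ in $[0,T]\times(\R^n\setminus\bar\Omega)$, so by Lemma~\ref{lemma_C} we get $\partial w/\partial\nu|_{[0,T]\times\bo}=N(0)=0$. Here it is essential that $w$ solves the wave equation on all of $\R^n$ with $F$ supported in $\bar\Omega$ and the speed normalized to $1$ outside $\Omega$. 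Consequently $w$ has zero Cauchy data on $[0,T]\times\bo$.

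Next I would introduce the even extensions $\tilde w$, $\tilde a$ of $w$, $a$ in the variable $t$ to $(-T,T)$; by hypothesis these satisfy the regularity \r{reg} on $Q=(-T,T)\times\Omega$. Since the operator $P$ of \r{P_def} is independent of $t$ and even in $\partial_t$, and the source $\tilde a(t,x)F(x)$ is even in $t$, the restriction of $\tilde w$ to $(-T,T)\times\Omega$ solves \r{p1} with $a$ replaced by $\tilde a$: on $t>0$ this is the given equation, on $t<0$ it follows by the reflection $t\mapsto -t$, and $\tilde w|_{t=0}=w|_{t=0}=0$. Moreover, the even extension of a function that vanishes, together with its normal derivative, on $[0,T]\times\bo$ vanishes together with its normal derivative on $(-T,T)\times\bo$, so $\tilde w$ has zero Cauchy data on $(-T,T)\times\bo$. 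Finally, $\tilde a(0,\cdot)=a(0,\cdot)$, so the ellipticity condition \r{alpha} holds on $K:=\{x\in\bar\Omega;\ \dist(x,\bo)\le T\}$, which is the closure of the set in \r{thm2.1t}.

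At this point I would simply invoke Theorem~\ref{thm_2} for $u=\tilde w$, $a=\tilde a$ on the symmetric interval $(-T,T)$: the boundary $\bo$ is strictly convex, $x^n=\dist(x,\bo)$ is a smooth function in $\Omega$ with non-vanishing differential for $0\le x^n\le T$ and the level sets $\{x^n=s\}$, $0\le s<T$, are strictly convex, the regularity \r{reg} and the ellipticity \r{alpha} hold, and $\tilde w$ has zero Cauchy data on $(-T,T)\times\bo$. Theorem~\ref{thm_2} then gives $F(x)=0$ for $x\in\Omega$ with $\dist(x,\bo)<T$, which is \r{thm2.1t}.

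The content of the argument is essentially packaged in Theorem~\ref{thm_2} and Lemma~\ref{lemma_C}, so there is no serious analytic obstacle; the one thing to be careful about is the legitimacy of the even extension. One must check that $\tilde w$ genuinely inherits the regularity \r{reg} and solves \r{p1} across $t=0$ — which is exactly why the initial condition $w_t|_{t=0}=0$ (built into the hypothesis that $w$ admits an even extension of the required regularity) cannot be dropped, and the matching of time derivatives up to order three at $t=0$ must be verified. The recovery of the Neumann data is the only genuinely new ingredient compared with Theorem~\ref{thm_2}, and it is immediate here precisely because the prescribed Dirichlet data is zero.
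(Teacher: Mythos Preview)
Your proposal is correct and follows exactly the same strategy as the paper: apply Lemma~\ref{lemma_C} to recover the vanishing Neumann data, extend $w$ and $a$ evenly in $t$ to $(-T,T)$, and then invoke Theorem~\ref{thm_2}. You have simply spelled out in more detail the justification for the even extension solving \r{p1} across $t=0$ and inheriting zero Cauchy data on $(-T,T)\times\bo$, which the paper compresses into a single sentence.
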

\begin{proof}
We first apply Lemma~\ref{lemma_C} to conclude that the normal derivative of $w$ vanishes on $[0,T]\times \bo$, as well. 
The assumptions of the theorem  imply that $a(t,x)$ can be extended in an even way satisfying the assumptions of Theorem~\ref{thm_2}. Then  $w$ admits an even extension, as a solution of the wave equation, and thus we  apply Theorem~\ref{thm_2}. 
\end{proof}

Since recovery of $F$ in \r{w2} from $w|_{[0,T]\times\bo}$ is a linear problem, we also get uniqueness for that problem in the set \r{thm2.1t}. We also get unique recovery of the speed $c$ in the region in \r{thm2.1t}. Those are in fact partial cases of Theorem~\ref{thm_uq2t} and Theorem~\ref{thm_nonlin} below. 

The local data result, in the spirit of Theorem~\ref{thm_uq2}, is not so straightforward because the recovery of the Neumann data is not so direct.

Define the ``cone''
\be{Cone}
C_{\hat t, \hat x} :=\{(t,x)\in \R_+\times\bo; \;  t+\dist_0(x,\hat x)\le \hat t\},
\ee
where for $a$, $b$ in $\R^n\setminus \Omega$, $\dist_0(a,b)$ is the infimum of the lengths of all smooth curves lying in $\R^n\setminus \Omega$ that connect $a$ and $b$. 

\begin{theorem}  \label{thm_uq2t}
Let $\Omega$, $\Omega_1$, $\mathcal{G}$ and $\Sigma_s$  be as in Theorem~\ref{thm_uq2}. Let $w$ solve \r{w2} and let $a$, $w$ have even extensions that satisfy \r{reg} and \r{alpha} with $K$ being the closure of the set in \r{th_5} below. Assume that  $w=0$ on $\mathcal{G}$. Then 
\be{th_5}
F =0 \quad \text{in}\quad   \{x\in\Omega\cap (\cup_s \Sigma_s);\; \exists y\in \Gamma, \;     C_{\dist(x,y), y} \subset\mathcal{G}        \}.
\ee
\end{theorem}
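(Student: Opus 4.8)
The plan is to reduce the partial-data thermoacoustic statement to Theorem~\ref{thm_uq2}, whose only essential input beyond the geometric hypotheses is that $w$ has \emph{zero Cauchy data} on $\mathcal{G}$. Since by hypothesis we are only given $w=0$ on $\mathcal{G}=\{(t,x):x\in\Gamma,\ 0<t<\tau(x)\}$, the main task is to produce the Neumann data $\partial w/\partial\nu$ on a (possibly smaller) subset of $\mathcal{G}$, and to control exactly \emph{which} subset, which is what the ``cone'' condition in \r{th_5} records. First I would even-extend $a$ and $w$ in $t$, so that $w$ solves the wave equation on a symmetric time interval and the results of section~\ref{sec_un1} apply; this is legitimate by the assumed even-extension regularity, exactly as in the proof of Theorem~\ref{thm_2t}.

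Next I would recover the Neumann trace locally. Let $v$ solve the exterior unit-speed problem \r{1w} in $\R^n\setminus\bar\Omega$ with Dirichlet data $g=w|_{[0,T]\times\Gamma}$ (extended by something; the point is we only trust it on $\Gamma$ up to time $\tau$). As in Lemma~\ref{lemma_C}, $v-w$ solves the homogeneous unit-speed wave equation outside $\Omega$ with zero initial data; but now we only know its Dirichlet data vanishes on $\Gamma$, not on all of $\bo$. By finite speed of propagation in the exterior region --- using the exterior distance $\dist_0$ built into the definition \r{Cone} --- one gets $v=w$, hence $\partial w/\partial\nu=Ng$, at a point $(\hat t,\hat x)\in\R_+\times\bo$ precisely when the backward exterior light cone from $(\hat t,\hat x)$ stays inside the region where the Dirichlet data is known, i.e.\ when $C_{\hat t,\hat x}\subset\mathcal{G}$. (Here one invokes the sharp exterior domain-of-dependence statement, e.g.\ \cite[Proposition~2]{finchPR}, together with the mapping properties of $N$ from \cite{LasieckaLT}.) Thus $w$ has zero Cauchy data on the set $\mathcal{G}':=\{(\hat t,\hat x)\in\mathcal{G};\ C_{\hat t,\hat x}\subset\mathcal{G}\}$.

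With zero Cauchy data on $\mathcal{G}'$ in hand, I would apply Theorem~\ref{thm_uq2} (in its even-extended, symmetric-interval form) with $\mathcal{G}$ there replaced by $\mathcal{G}'$ and with the family $\Sigma_s$ unchanged. The conclusion \r{thm2.2} gives $\supp F\cap\Sigma_s=\emptyset$ for every $s$ such that the reachability condition \r{thm_uq_cX} holds with $\Gamma$ replaced by the $x$-projection $\Gamma'$ of $\mathcal{G}'$, i.e.\ for every $x\in\Sigma_s\cap\bar\Omega$ there is $y$ with $(t,y)\in\mathcal{G}'$ for $0<t<\dist(x,y)$; unwinding the definition of $\mathcal{G}'$, this is exactly the requirement $C_{\dist(x,y),y}\subset\mathcal{G}$ for some $y\in\Gamma$. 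Collecting the points $x$ for which this holds yields precisely the set in \r{th_5}, so $F=0$ there. Then the claim that one may recover the speed $c$ in the same region follows from the linearization \r{w1}--\r{Fa} exactly as in the discussion after Theorem~\ref{thm_2t}: $F=\tilde c^2-c^2$ vanishes on that set.

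The step I expect to be the genuine obstacle is the sharp bookkeeping in the Neumann recovery: one must be careful that the exterior distance $\dist_0$ (curves constrained to $\R^n\setminus\Omega$), rather than the ambient or the interior metric distance, is what governs the exterior domain of dependence, and that the time cutoff $\tau(x)$ on $\Gamma$ propagates correctly to a cutoff on where $v=w$; the cone $C_{\hat t,\hat x}$ is designed exactly to encode this, and verifying that $\mathcal{G}'$ has the stated description --- and that it is nonempty/large enough to make the reachability hypothesis usable in examples --- is where the care is needed. Everything downstream is a direct citation of Theorem~\ref{thm_uq2} and Proposition~\ref{pr_uc}.
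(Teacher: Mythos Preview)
Your proposal is correct and follows essentially the same route as the paper: recover the vanishing of the Neumann data on the cone-restricted subset $\mathcal{G}'=\{(\hat t,\hat x)\in\mathcal{G}:C_{\hat t,\hat x}\subset\mathcal{G}\}$ via the exterior finite-domain-of-dependence result of \cite{finchPR}, then invoke Theorem~\ref{thm_uq2}. The paper's argument is marginally leaner in that it applies domain of dependence directly to $w$ in the exterior (since $w$ already has zero Dirichlet data on $\mathcal{G}$ and zero initial data there, no auxiliary $v$ is needed---your $v$ may simply be taken to be $0$), but the substance is identical.
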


\begin{proof}

Outside $\Omega$, $w$ solves the wave equation $Pw=0$ with zero Cauchy data for $t=0$ and zero Dirichlet data on $\mathcal{G}$, see \r{i1}. This does not allow us immediately to conclude that the Neumann data vanishes there, too. On the other hand, by the finite domain of dependence result in \cite{finchPR}, $\partial w/\partial\nu=0$  near $(t,y)\in \mathcal{G}$, if the ``cone'' $C_{t, y} $ is contained in $\mathcal{G}$. 
By Theorem~\ref{thm_uq2}, 
this implies $w(x)=0$ in a neighborhood of all $x$  in the union of all $\Sigma_s$ so that $\dist(x,y) \le t$. This completes the proof. 
\end{proof}

\begin{remark}\label{rem_un} 
In case of observations on the whole boundary, i.e., when $\Gamma=\bo$, with $\mathcal{G}=[0,T]\times \bo$,  \r{th_5} implies $F=0$ in the set $\dist(x,\bo)\le T$. In particular, 
\be{T}
T>\dist(\Omega,\bo)
\ee
is sufficient to conclude $F=0$, see also \r{thm_uq_c3}. This in agreement with the results of the previous section since one can recover the Neumann data easily by Lemma~\ref{lemma_C}. 
\end{remark}

\begin{example}\label{ex_5'}
Let $g$ be Euclidean and let $\Omega$ be strictly convex. Let $\Gamma =\{x^n>a  \}\cap \bo$ with some fixed $a$, and let $\Omega_a =\{x^n>a  \}\cap \Omega$. Then $\Omega_a$ satisfies the foliation condition. For any $x\in \Omega_0$, let $y$ be the point on $\Gamma$ with the same $x'$ coordinates, where $x'=(x^1,\dots,x^{n-1})$. Then $|x-y|<\dist_0(y,\bo\setminus\Gamma)$ because even when $x\in \{x^n=a\}$ (then $|x-y|= y^n-a$ is maximized), the Euclidean distance from $y$ to $\{x^n=a\}$ minimizes the distance from $y$ to that plane with the constraint that we take it outside $\Omega$. Then the ``cone'' $ C_{|x-y|, y} $ is included in $\mathcal{G}$, if we choose $\mathcal{G}= [0,T]\times\Gamma$ with $T=\dist(\Gamma,\bo\setminus\Gamma)$, see \r{dist}. With that choice of $\mathcal{G}$, under the assumptions of the theorem, we get $F=0$ in $\Omega_a$. In other words, the ``cone condition'' in \r{th_5} is satisfied and therefore it is not restrictive in this case. 

By a perturbation argument, if $g$ is close enough to the Euclidean metric, then $F$ would vanish in a bit smaller set than $\Omega_0$. 
\end{example}

\subsection{Uniqueness for the non-linear problem} We now go back to the problem of determining the sound speed $c$ in \r{1A} from $\Lambda f$ with $f$ fixed and known. Clearly, some conditions on $f$ are needed since when $f=0$, for example, we get no information about $c$. 

Based on Theorem~\ref{thm_2t} or Theorem~\ref{thm_uq2t}, we can easily formulate versions for the non-linear problem. We will formulate a consequence of the latter theorem under conditions that guarantee that we can recover $c$ in the whole $\Omega$. 

\begin{theorem}\label{thm_nonlin}
Let $c$ and $\tilde c$ be two smooth positive speeds equal to $1$ outside $\Omega$.  Let $\Sigma_s$ be as in Theorem~\ref{thm_uq2} and satisfy (a) and (b) there with $F := \tilde c^2-c^2$ and with the strictly convexity assumption in (b) fulfilled w.r.t.\ the speed $c$.  Let 
\be{L1}
\tilde \Lambda f = \Lambda f \quad \text{on $[0,T]\times\bo$,\quad  with  $T$ satisfying \r{thm_uq_c2}}.   
\ee
Assume that for some compact $K\subset\bar\Omega$, 
\be{cond_a} 
\supp(\tilde c-c)\subset K, \quad \Delta f\not=0\quad \text{on $K$}.
\ee
Then $\tilde c=c$ in $\cup \Sigma_s$.

If in particular  $\cup \Sigma_s$ is dense in $\bar\Omega$, and $T>\dist(\Omega,\bo)$, then $\tilde c=c$. 
\end{theorem}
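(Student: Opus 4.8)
The plan is to reduce the non-linear problem to the linear source problem treated in Theorem~\ref{thm_uq2t}, by following the standard difference argument sketched in \r{w1}--\r{w1a} and \r{Fa}. First I would let $u$ and $\tilde u$ be the solutions of \r{1A} with speeds $c$ and $\tilde c$ respectively, and the same $f$. Set $w = \tilde u - u$; then $w$ solves \r{w2A} (equivalently \r{w2}) with $F := \tilde c^2 - c^2$ and $a := \Delta \tilde u$, with zero Cauchy data at $t=0$. The hypothesis \r{L1} says exactly that $\tilde\Lambda f = \Lambda f$ on $[0,T]\times\bo$, which by \r{w1a} means $w = 0$ on $[0,T]\times\bo$, in particular on $\mathcal{G} = [0,T]\times\Gamma$ for any $\Gamma\subset\bo$; here we take $\Gamma = \bo$ in the partial-data framework, or more precisely we invoke Theorem~\ref{thm_uq2t} with $\mathcal{G}=[0,T]\times\bo$.

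Next I would check the hypotheses of Theorem~\ref{thm_uq2t} one at a time. The regularity \r{reg} for $a$ and $w$, together with the existence of even extensions in $t$, follows because $u_t|_{t=0}=0$ and $\tilde u_t|_{t=0}=0$ (so both solutions extend evenly across $t=0$ as solutions of their respective wave equations), and $\Delta\tilde u$ inherits the corresponding regularity; this is the point of the discussion around \r{p1'}. The ellipticity condition \r{alpha}, i.e.\ $a(0,\cdot)\not=0$ on $K$, becomes $\Delta\tilde u(0,x) = \Delta f(x)\not=0$ on $K$ since $\tilde u|_{t=0}=f$; but this is precisely the hypothesis \r{cond_a}. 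Condition (a), $\supp F\subset\Sigma_{s_1}^{\text{int}}$, and condition (b), strict convexity of $\Sigma_s\cap\bar\Omega$ w.r.t.\ the speed $c$, are assumed directly in the statement; condition (c) is automatic when $\Gamma=\bo$. Finally $\supp F = \supp(\tilde c^2-c^2)\subset\supp(\tilde c - c)\subset K\subset\bar\Omega$, and $F=0$ on $\bo$ by the matching of $c$ and $\tilde c$ outside $\Omega$.

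With all hypotheses verified, Theorem~\ref{thm_uq2t} (in the whole-boundary case, cf.\ Remark~\ref{rem_un}) gives $F=0$ on the set described in \r{th_5}, which under \r{thm_uq_c2} contains $\cup_s\Sigma_s$ (this is exactly the content of Remark~\ref{rem_cond} relating \r{thm_uq_cX} to \r{thm_uq_c2}). Since $F = \tilde c^2 - c^2 = (\tilde c - c)(\tilde c + c)$ and $\tilde c + c > 0$, vanishing of $F$ is equivalent to $\tilde c = c$; hence $\tilde c = c$ on $\cup_s\Sigma_s$. For the last assertion: if $\cup_s\Sigma_s$ is dense in $\bar\Omega$ then $\tilde c = c$ on a dense subset of $\bar\Omega$, and by continuity of both speeds $\tilde c = c$ on $\bar\Omega$; the condition $T>\dist(\Omega,\bo)$ is the simpler sufficient form \r{thm_uq_c3} of \r{thm_uq_c2}, guaranteeing the cone condition in \r{th_5} is met (cf.\ Example~\ref{ex_5'}). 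The only mild subtlety — the ``main obstacle'' — is bookkeeping the even-extension regularity so that $a=\Delta\tilde u$ genuinely satisfies \r{reg} with $\partial_t^j a\in C(\bar Q)$ for $j\le 2$; this needs $\tilde u$ smooth enough near $t=0$, which holds because $f$ is smooth and $\tilde c$ is smooth, so $\tilde u\in C^\infty$ in a neighborhood of $\{t=0\}$ by standard hyperbolic regularity, and the even extension is then as regular as needed.
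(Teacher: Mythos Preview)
Your proposal is correct and follows essentially the same route as the paper: set $w=\tilde u-u$, observe that $w$ solves \r{w2} with $a$ and $F$ as in \r{Fa}, note that \r{alpha} reduces to $\Delta f\not=0$ on $K$ since $\tilde u|_{t=0}=f$, and then apply Theorem~\ref{thm_uq2t}. You have simply filled in more of the hypothesis-checking (even extension, the factorization $F=(\tilde c-c)(\tilde c+c)$, and the density argument for the final assertion) than the paper's terse proof does.
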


Examples of the latter statement include geodesic balls with a fixed center under the assumption that they are all strictly convex. Then $\tilde c=c$ everywhere except in the center, and by continuity, this is true in the center as well.

\begin{proof}
Set $w=\tilde u-u$, where $\tilde u$ and $u$ solve \r{1A} with the speeds $\tilde c$ and $c$, respectively; and the same $f$. Then $w$ solves \r{w2} with $a$ and $F$ as in \r{Fa}. The condition \r{alpha} is then equivalent to $\Delta\tilde u|_{t=0}\not=0$. By \r{1A}, this is equivalent to $\Delta f\not=0$. Then an application of Theorem~\ref{thm_uq2t} completes the proof. 
\end{proof}

\begin{remark}
The condition $\Delta f\not=0$ may look mysterious at first glance. The stability analysis below shows that it is needed for the linearization to be Fredholm. A simplified look at this condition is the following. Let us remove the need for $c$ to be $1$ outside $\Omega$. Then any harmonic function $f$ is also a time independent solution $u=f$ of the wave equation $(\partial_t^2-c^2(x)\Delta)u=0$, regardless of $c$. Then $\Lambda f$ carries no information about $c$ at all. If $f$ is harmonic only on some open set $U$, then $u=f$ (regardless of the speed) in the light cone with base $U$, and then $a=0$ there, see \r{Fa}. Then the kernel of the linearized map 
would be $C^\infty$ for $y\in U$, as it follows from \r{ker}. 
That implies high instability for the linearization, at least. 
\end{remark}

\subsection{Stability} As a general principle, we have stability in Sobolev spaces if we can detect all singularities at $\mathcal{G}$ where we make measurements, see, e.g., \cite{SU-JFA}. Under the assumptions of Theorem~\ref{thm_nonlin}, that would require the following
\be{cond-stab}
\text{$\forall (x,\xi)\in SK$, the geodesic through $(x,\xi)$ hits $\bo$ for some $t$ with $|t|<T$}.
\ee
Here we used the fact that the problem extends in an even way w.r.t.\ $t$. We also identify vectors and covectors by the metric $g$. 

Notice that condition~\r{cond-stab} is stronger than the uniqueness condition \r{T}. The latter requires that from any $x$ there is a signal (a unit speed curve) originating from $x$ reaching $\bo$ up to time $T$, i.e., $\dist(x,\bo)<T$. Condition~\r{cond-stab} requires that from any $x$ and any direction $\xi$ the geodesic through it reaches $\bo$ for time $|t|<T$. The same conditions appear in the analysis of the thermoacoustic problem of recovery of $f$, given $c$ and $\Lambda f$, see \cite{SU-thermo, QSUZ_skull}. Here however, we   assume the foliation condition as well.

Let $R\mathbf{w}$ be the trace of the first component $w$ of $\mathbf{w} :=[w,w_t]$, defined on $[0,T]\times\Omega$, to $[0,T]\times\bo$.

\begin{theorem}\label{thm_st}
Let $w$ solve \r{w2} with $w$, $a$ satisfying the regularity assumptions of Theorem~\ref{thm_2t} , and let $F$ be supported in a compact $K\subset\Omega$, with $F\in L^2(K)$, and let $a(0,\cdot)|_K\not=0$. Let $\Sigma_s$ be as in Theorem~\ref{thm_uq2} , and assume that $\cup\Sigma_s$ is dense in $\Omega$ (the foliation condition). Let $K$ and $T$ satisfy \r{cond-stab} (the stability condition). Then 
\be{3.3}
\|F\|_{L^2(K)}\le C\|w_{tt}\|_{L^2([0,T]\times\bo)}
\ee
with a constant $C$ that remains uniform when the coefficient $a$ stays in a fixed  bounded set in $C^2([0,T];\; C(\bar\Omega))$.
\end{theorem}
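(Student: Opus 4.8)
The plan is to establish the stability estimate \r{3.3} by combining the uniqueness result (which is already available from Theorem~\ref{thm_uq2t} applied in the present setting, using the foliation condition) with a standard functional-analytic compactness argument of the ``uniqueness plus compactness implies stability'' type. First I would set up the linear solution operator. For $F\in L^2(K)$, let $w=w_F$ be the solution of \r{w2}; by the regularity hypotheses (even extension, \r{reg}), the map $F\mapsto w_{tt}|_{[0,T]\times\bo}$ is a bounded linear operator, call it $\mathcal{L}: L^2(K)\to L^2([0,T]\times\bo)$. The estimate \r{3.3} is the assertion that $\mathcal{L}$ is injective with a bounded left inverse on its range, equivalently that $\|F\|_{L^2(K)}\le C\|\mathcal{L}F\|_{L^2}$.

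The heart of the argument is to upgrade injectivity to the desired \emph{a priori} bound. Here is where the stability condition \r{cond-stab} enters, and it is the main obstacle: one cannot argue purely by compactness (there is no spectral gap), one needs a genuine microlocal statement that \emph{all} singularities of $F$ are detected at $[0,T]\times\bo$. Concretely, the map $F\mapsto w_{tt}|_{t=0}=a(0,\cdot)F$ is elliptic by \r{alpha}, so $F$ and $a(0,\cdot)F=w_{tt}(0,\cdot)$ have the same wave front set; and the wave operator propagates $\WF(w_{tt}(0,\cdot))$ along bicharacteristics, which project to geodesics. Condition \r{cond-stab} guarantees every such geodesic through $SK$ reaches $\bo$ within time $|t|<T$, so every singularity of $w_{tt}(0,\cdot)$ shows up in $w_{tt}|_{[0,T]\times\bo}$ via an elliptic Fourier integral operator (this is exactly the mechanism in \cite{SU-thermo, QSUZ_skull}, and the abstract principle is \cite{SU-JFA}). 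Consequently $\mathcal{L}$ is, modulo smoothing, left-invertible: there is a bounded operator $\mathcal{P}$ with $\mathcal{P}\mathcal{L}=\Id+\mathcal{K}$ where $\mathcal{K}:L^2(K)\to L^2(K)$ is compact. I would prove this parametrix statement carefully, tracking the fact that $T>0$ is fixed and finite, using unit-speed geodesic propagation on the \emph{symmetric} interval $(-T,T)$ afforded by the even extension.

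With the Fredholm identity $(\Id+\mathcal{K})F=\mathcal{P}\mathcal{L}F$ in hand, the estimate follows by the usual dichotomy. Suppose \r{3.3} fails; then there exist $F_j$ with $\|F_j\|_{L^2(K)}=1$ and $\|\mathcal{L}F_j\|_{L^2}\to0$. Passing to a subsequence, $F_j\rightharpoonup F$ weakly in $L^2(K)$, and by compactness of $\mathcal{K}$, $\mathcal{K}F_j\to\mathcal{K}F$ strongly; since $\mathcal{L}F_j\to0$ and $\mathcal{P}$ is bounded, $\mathcal{P}\mathcal{L}F_j\to0$, hence $F_j=\mathcal{P}\mathcal{L}F_j-\mathcal{K}F_j\to-\mathcal{K}F$ strongly, so $F_j\to F$ in $L^2(K)$ with $\|F\|_{L^2(K)}=1$ and $\mathcal{L}F=0$. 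But $\mathcal{L}F=0$ means $w_F=0$ on $[0,T]\times\bo$ (integrating twice in $t$ and using the vanishing of $w_F$ and $\partial_t w_F$ at $t=0$ — note $w_{tt}|_{[0,T]\times\bo}=0$ forces $w|_{[0,T]\times\bo}=0$), so Theorem~\ref{thm_uq2t} together with the density of $\cup\Sigma_s$ in $\Omega$ gives $F=0$ on all of $K$, contradicting $\|F\|_{L^2(K)}=1$. Finally, for the uniformity claim, I would observe that all the constructions above — the ellipticity constant in \r{alpha}, the norm of the parametrix $\mathcal{P}$, and the compact remainder $\mathcal{K}$ — depend continuously on $a$ through finitely many derivatives, so they stay controlled as $a$ ranges over a bounded set in $C^2([0,T];C(\bar\Omega))$; one runs the same contradiction argument with a sequence $a_j$ in that bounded set and $F_j$ as above, extracting a convergent subsequence of the $a_j$ (in a weaker topology, sufficient to pass to the limit in the equation) to reach the same contradiction.
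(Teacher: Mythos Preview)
Your approach is essentially the same as the paper's: build a Fredholm identity $(\Id+\text{compact})F=\mathcal{P}\mathcal{L}F$ using the microlocal structure guaranteed by \r{cond-stab}, then invoke uniqueness (via the foliation condition and Theorem~\ref{thm_uq2t}) to upgrade this to the estimate \r{3.3}. The paper carries out exactly this program; the differences are in how two steps are executed.

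For the parametrix, you leave the construction schematic. The paper makes it explicit via Duhamel's formula: writing $\mathbf{w}(t)=\int_0^t\mathbf{U}(s)[0,a(t-s)F]\,\d s$ and differentiating twice in $t$ (using $a'(0)=0$ from the even extension) yields
\[
\partial_t^2 Rw=\Lambda\big(a(0)F\big)+R\int_0^t\mathbf{U}(t-s)[0,a''(s)F]\,\d s,
\]
where $\Lambda$ is the thermoacoustic trace operator of \r{1b}. This cleanly separates the principal part, to which the back-projection parametrix $QB\chi$ from \cite{SU-thermo} applies directly, from an integral term that maps $L^2(K)\to H_D(\Omega)$ and is therefore compact. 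Your abstract FIO argument is correct in spirit, but since $w$ solves an \emph{inhomogeneous} equation, propagation of singularities for $w_{tt}$ is not immediate; the Duhamel differentiation is precisely the device that reduces to the free (homogeneous) problem where the FIO calculus of \cite{SU-thermo} is already available.

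For the uniformity in $a$, the paper explicitly warns that the compactness argument alone ``does not follow directly\dots\ because there is no control over $C$,'' and instead perturbs the already-established estimate: continuity of $a\mapsto w_{tt}|_{[0,T]\times\bo}$ in $C^2([0,T];C(\bar\Omega))$ gives $\|\tilde w_{tt}-w_{tt}\|\le\delta\|F\|$ for $\tilde a$ near $a$, and the extra term is absorbed. Your sequential extraction is a reasonable alternative, but you should be aware that a bounded set in $C^2([0,T];C(\bar\Omega))$ is only precompact in weaker topologies, and you must check both that the ellipticity bound $|a(0,\cdot)|\ge c_0>0$ on $K$ survives in the limit and that the parametrix data pass to the limit in that weaker topology; the paper's perturbation argument sidesteps this.
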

\begin{proof}
We use the notation $a(t)=a(t,\cdot)$ below. 
Differentiate
\be{ker}
\mathbf{w} = \int_0^t \mathbf{U}(s)\left[  0,a(t-s)F   \right] \,\d s
\ee
to get
\[
\partial_tRw =   R\mathbf{U}(t)[0,a(0) F] + R\int_0^t \mathbf{U}(s)[0,a'(t-s) F]\,\d s.
\]
Differentiate again and use the identity $a'(0)=0$ and the definition of $\Lambda$ to get
\be{3.1a}
\partial_t^2 Rw =   \Lambda a(0) F + R\int_0^t\mathbf{U}(s)[0,a''(t-s) F]\,\d s  = \Lambda a(0) F + R\int_0^t \mathbf{U}(t-s)[0,a''(s) F]\,\d s.
\ee
Let $\chi\in C^\infty([0,T])$ be such that $\chi=0$ near $T$, and $\chi=1$ on $[0,T_0]$, where $T_0<T$ is such that \r{T} still holds with $T$ replaced by $T_0$. Let $B$ be the back-projection operator defined as follows. Let $v$ solve
\[
\left\{
\begin{array}{rcll}
(\partial_t^2 -c^2\Delta)v &=&0&  \mbox{in $(0,T)\times \Omega$},\\
v|_{t=T}  = \partial_t v|_{t=T}& =&0,\\
v|_{[0,T]\times\bo} &=& h.
\end{array}
\right.               
\]
Then $Bh := v|_{t=0}$. By \cite[Theorem~3]{SU-thermo}, $B\chi \Lambda$ is a  classical \PDO\ of order $0$ with principal symbol
\[
\frac12 \chi(\gamma_{x,\xi}(\tau_+(x,\xi)))+ \frac12 \chi(\gamma_{x,\xi}(\tau_-(x,\xi))),
\]
where $\gamma_{x,\xi}$ is the unit speed geodesic issued from $(x,\xi)$, and $\pm\tau_\pm\ge0$ are the times need for it to hit $\bo$. We recall that we identify vectors and covectors by the metric $g$. Condition~\r{cond-stab} guarantees that the symbol above is elliptic. Let $Q$ be a parametric for it, i.e., $Q B\chi\Lambda=\Id +K_0$ in $\Omega$, with $K$ smoothing. For the purpose of this proof, we only need $K_0\in \Psi^{-1}$, that can be achieved with finite smoothness requirements on $c$. 

Apply $QB\chi$ to \r{3.1a} to get
\be{3.1ab}
QB\chi\partial_t^2 Rw =   (\Id+K_0) a(0) F + QB\chi \int_0^t R\mathbf{U}(t-s)[0,a''(s) F]\,\d s.
\ee
For any $s\in [0,t]$, the function $[0,a''(s) F]$ belongs to the energy space $\mathcal{H}$ and is supported in $\Omega$. Then   $R\mathbf{U}(t-s)$ (a more accurate notation  would be $R\mathbf{U}(\cdot-s)$) maps that function to a function that belongs to $H^1_{(0)}(\R\times \bo)$, where the subscript $(0)$ indicates a support disconnected from $t=0$. This is  explained in \cite{SU-thermo} in the context of thermoacoustic tomography, and the reason is that $R\mathbf{U}(\cdot-s)$ is an FIO of order $0$ with a canonical relation of graph type, see also \cite{Tataru_98}. The dependence on $s$ is continuous, therefore, the integral in \r{3.1a} belongs to that space, as well. By \cite{LasieckaLT}, $B: H^1_{(0)}(\R\times \bo) \to H_D(\Omega)$ and is continuous.  Therefore, the integral term in \r{3.1a} is a compact operator of $F$ in $L^2(\Omega)$, mapping $L^2(\Omega)$ into $H_D(\Omega)$. 

Therefore, since $a(0)|_K\not=0$, 
\be{3.4}
\|F\|_{L^2(K)}\le C \|\partial_t^2Rw\|_{L^2([0,T]\times \bo)} + C\|K_2F\|_{L^2(\Omega)},
\ee
with $K_2 :L^2(K)\to L^2(\Omega)$ compact. 
We used here the fact that $B: L^2_{\textrm{comp}}(\R\times \bo) \to L^2(\Omega)$ that also follows from \cite{LasieckaLT}; or from the property of $B$ to be an FIO of order $0$ with a canonical relation of graph type \cite{SU-thermo}. By \cite[Proposition~5.3.1]{Taylor-book1}, estimate \r{3.4} implies a similar one, with a different $C$, and the last term missing.

The statement about the uniformity of $C$ does not follow directly from the last argument above because there is no control over $C$. Instead, we will perturb estimate \r{3.3}. Notice first that by \r{3.1a}, the map  $C^2([0,T];\; C(\bar\Omega)) \ni a\mapsto w_{tt}|_{[0,T]\times\bo}\in L^2$ is continuous. Then if $a$ and $\tilde a$ are two coefficients and $w$, $\tilde w$ are the corresponding solutions, we have
\[
\begin{split}
\|F\|_{L^2(K)} \le &C \|w_{tt}\|_{L^2([0,T]\times \bo)} \\
                \le &C \|\tilde w_{tt}\|_{L^2([0,T]\times \bo)}  +C \|\tilde w_{tt}-w_{tt}\|_{L^2([0,T]\times \bo)}   \\
                \le &C \|\tilde w_{tt}\|_{L^2([0,T]\times \bo)}  +C\delta \|F\|_{L^2(K)},
\end{split}
\]
where $\delta\ll1$ when $\tilde a$ is close enough to $a$ in $C^2([0,T];\; C(\bar\Omega))$. We can therefore absorb the $\delta$ term with the l.h.s.
\end{proof}

We are ready now to formulate the stability result for the thermoacoustic problem.

\begin{theorem}\label{thm_st2} 
Let $K\subset\Omega$ be a compact set. 
Let $c$, $\tilde c\in C^k$, $f\in H^{k+1}$, $k>n/2$,  be as in Theorem~\ref{thm_nonlin} and let the assumptions of that theorem be satisfied, except for \r{L1}. Then
\be{3.5}
\|\tilde c-c\|_{L^2(K)}\le C\|\partial_{t}^2\delta \Lambda f\|_{L^2([0,T]\times\bo)},
\ee
with $C=C(C_1)$ uniform if $\tilde c$ satisfies $\|\tilde c\|_{C^k}\le C_1$, $1/C_1\le \tilde c$, $k>n/2$. 
\end{theorem}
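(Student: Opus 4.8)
The plan is to derive the nonlinear stability estimate \r{3.5} from the linear one \r{3.3} by a standard linearization-plus-perturbation argument, exploiting the fact that $w = \tilde u - u$ solves \r{w2} with the specific choice \r{Fa}, namely $F = \tilde c^2 - c^2$ and $a = \Delta \tilde u$. The key observation is that $\|\tilde c - c\|_{L^2(K)}$ and $\|F\|_{L^2(K)} = \|\tilde c^2 - c^2\|_{L^2(K)}$ are equivalent norms, since $\tilde c^2 - c^2 = (\tilde c + c)(\tilde c - c)$ and $\tilde c + c$ is bounded above and below away from zero (using $1/C_1 \le \tilde c$ and $\|\tilde c\|_{C^k} \le C_1$, together with the fixed background $c$). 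So it suffices to estimate $\|F\|_{L^2(K)}$ by $\|\partial_t^2 \delta\Lambda f\|_{L^2}$, and since $\delta\Lambda f = w|_{[0,T]\times\bo}$ by \r{w1a}, the right-hand side is exactly $\|w_{tt}\|_{L^2([0,T]\times\bo)}$.

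First I would check that Theorem~\ref{thm_st} applies: the hypotheses of Theorem~\ref{thm_nonlin} give the foliation condition and the strict convexity of $\Sigma_s$ with respect to $c$, while \r{cond-stab} is assumed (it is the stability condition, replacing \r{L1}), and \r{cond_a} gives $\Delta f \ne 0$ on $K$, hence $a(0,\cdot) = \Delta\tilde u|_{t=0} = \Delta f \ne 0$ on $K$ by \r{1A}. The regularity $c,\tilde c \in C^k$, $f \in H^{k+1}$ with $k > n/2$ ensures, by standard energy estimates for \r{1A}, that $\tilde u$ is regular enough that $a = \Delta\tilde u \in C^2([0,T]; C(\bar\Omega))$ with an even extension as in \r{reg} (the even extension in $t$ works because $u_t|_{t=0}=0$, so $\Delta \tilde u$ is even to the required order), and similarly $w$ satisfies \r{reg}. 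Then Theorem~\ref{thm_st} gives $\|F\|_{L^2(K)} \le C\|w_{tt}\|_{L^2([0,T]\times\bo)}$, and crucially the constant $C$ is uniform as long as $a = \Delta\tilde u$ stays in a bounded set of $C^2([0,T]; C(\bar\Omega))$.

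The main obstacle — and the reason the uniformity statement needs care — is controlling $a = \Delta\tilde u$ in terms of $\tilde c$. Here I would invoke the well-posedness and stability of the forward problem \r{1A}: if $\|\tilde c\|_{C^k} \le C_1$ and $1/C_1 \le \tilde c$ with $k > n/2$, then the solution map $\tilde c \mapsto \tilde u$ is bounded from this set into, say, $C([0,T]; H^{k+1})$, so by Sobolev embedding ($k - 1 > n/2 - 1$, and one needs $k+1 - 2 = k-1 > n/2$, i.e. $k > n/2 + 1$; if only $k > n/2$ is wanted one argues at the level of $\Delta\tilde u$ directly, which lies in $C([0,T]; H^{k-1})$ and one needs a bit more, so I would either assume $k$ slightly larger or note that the relevant norm $C^2([0,T];C(\bar\Omega))$ of $\Delta\tilde u$ is controlled by finitely many $C^k$-norms of $\tilde c$ via repeated differentiation of the equation in $t$ and elliptic regularity in $x$). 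This bounds $a$ in $C^2([0,T]; C(\bar\Omega))$ by a function of $C_1$ alone, hence the constant $C$ in Theorem~\ref{thm_st} becomes a function of $C_1$ only. Combining with the norm equivalence $\|\tilde c - c\|_{L^2(K)} \le C(C_1)\|\tilde c^2 - c^2\|_{L^2(K)} = C(C_1)\|F\|_{L^2(K)}$ and with $\|w_{tt}\|_{L^2([0,T]\times\bo)} = \|\partial_t^2 \delta\Lambda f\|_{L^2([0,T]\times\bo)}$ yields \r{3.5} with $C = C(C_1)$, as claimed. I do not expect any genuinely new difficulty beyond this bookkeeping; the analytic heart of the matter is entirely in Theorem~\ref{thm_st}.
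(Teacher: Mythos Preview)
Your proposal is correct and follows essentially the same route as the paper: reduce to Theorem~\ref{thm_st} via \r{Fa}, identify $\partial_t^2\delta\Lambda f$ with $w_{tt}|_{[0,T]\times\bo}$, and then argue that the uniformity of $C$ reduces to bounding $a=\Delta\tilde u$ in $C^2([0,T];C(\bar\Omega))$ in terms of the $C^k$ bounds on $\tilde c$, which the paper does by the same energy/Sobolev argument you sketch (iterating $\mathbf{A}^k$ on $\mathbf{f}\in H^{k+1}\times H^k$ and using $k>n/2$). You actually spell out more than the paper does---the norm equivalence $\|\tilde c-c\|_{L^2}\sim\|\tilde c^2-c^2\|_{L^2}$ and the verification that $a(0,\cdot)=\Delta f\neq0$ on $K$---and you correctly flag the slight tension in the regularity count (the paper's own arithmetic gives $k\ge n/2+1$ for $\tilde u\in C^2$, not just $k>n/2$).
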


\begin{proof}
Apply Theorem~\ref{thm_st} with $a$ and $F$ as in \r{Fa}. We only need to prove the statement about the uniformity of $C$. That requires to estimate $\|\tilde u\|_{C^2([0,T]\times\bo )}$ in terms of $\tilde c$, see \r{Fa} and Theorem~\ref{thm_st}.

It is straightforward to see that if $c\in C^{k-1}$, then $\mathbf{A}\!^k\mathbf{f}\in 
\mathcal{H}$ provided that $\mathbf{f}\in H^{k+1}\times H^k$. Then $\mathbf{A}\!^k\mathbf{U}(t)\mathbf{f}$ is locally in $H^{k+1}\times H^k$, therefore the first component of $\mathbf{U}(t)\mathbf{f}$ is $C^{k+1-n/2}$ provided that $k+1-n/2>0$. We have  $k+1-n/2\ge2$ when $k-1\ge n/2$. Therefore, $ \tilde u\in C([0,T];\; C^2)$ when $k>n/2$. The analysis of the rest of the second derivatives of $u$ is similar. 
\end{proof}


\end{document}